\newtheorem{thm}{Theorem}[section]
\newtheorem{lem}[thm]{Lemma}
\newtheorem{prop}[thm]{Proposition}
\newtheorem{defn}{Definition}[section]
\newcommand{\T}{\mathcal{T}}
\renewcommand{\L}{\mathcal{L}}
\renewcommand{\H}{\mathcal{H}}
\newcommand{\F}{\mathcal{F}}
\newcommand{\G}{\mathcal{G}}
\renewcommand{\phi}{\varphi}
\newcommand{\RR}{\mathbb{R}}
\newcommand{\ZZ}{\mathbb{Z}}
\newcommand{\eps}{\varepsilon}
\begin{document}

\title{Polynomial Dynamical Systems, 
Reaction Networks, and
Toric Differential Inclusions
}

\author{
Gheorghe Craciun\\ 
Department of Mathematics and \\ 
Department of Biomolecular Chemistry\\
University of Wisconsin-Madison\\
e-mail: \texttt{craciun@math.wisc.edu}\\
\\
\\
}

\date{\today}

\maketitle

\begin{abstract}
\noindent 
Some of the most common mathematical models 
in biology, chemistry, physics, and engineering, are polynomial dynamical systems, i.e., systems of differential equations with polynomial right-hand sides.
Inspired by notions and results that have been developed for the \mbox{analysis} of reaction networks in biochemistry and chemical engineering, we show that any polynomial dynamical system on the positive orthant $\RR^n_{> 0}$ can be regarded as being generated by an oriented graph embedded in $\RR^n$, called {\em Euclidean embedded graph}. 
This allows us to recast  key conjectures about reaction network models (such as the Global Attractor Conjecture, or the Persistence Conjecture) into more general versions about some important classes of polynomial dynamical systems. Then, we introduce {\em toric differential inclusions}, which are piecewise constant autonomous dynamical systems with a remarkable geometric structure. We show that if a Euclidean embedded graph $\G$ has some reversibility properties, then any polynomial dynamical system generated by $\G$ can be embedded into a toric differential inclusion. We discuss  how this embedding suggests an approach for the proof of the Global Attractor Conjecture and Persistence Conjecture. 
%


\end{abstract}

\section{Introduction}

Many mathematical models in biology, chemistry, physics, and engineering are given by polynomial dynamical systems, or more generally, power-law dynamical systems \cite{CNP, TDS, ctf06, Craciun_Tran, Banaji_2013, mf72,  Feinberg_1979, Feinberg_1987, gunawardena, Horn_Jackson, Savageau_Voit, Yu_Craciun}. Almost always, these can  be interpreted as population dynamics models, where the variables of interest are positive.

Any autonomous polynomial dynamical system (i.e., system of differential equations with polynomial right-hand side)  on the strictly positive orthant $\RR^n_{> 0}$ can be represented as 
\begin{equation}\label{polynomial_1}
\frac{dx}{dt} = \sum_{i = 1}^m x^{s_i} v_i 
\end{equation}
where $x = (x_1, ..., x_n) \in \RR^n_{> 0}$, $s_1, ..., s_m$ are some vectors in $\ZZ^n_{\ge 0}$ called {\em exponent vectors},  $x^{s_i}$ denotes the monomial $x_1^{s_{i1}} x_2^{s_{i2}} ... x_n^{s_{in}}$, and $v_1, ..., v_m$ are  vectors in $\RR^n$. A {\em solution} of (\ref{polynomial_1}) is a function $x : I \to  \RR^n_{> 0}$ that satisfies (\ref{polynomial_1}), where $I$ is an interval in $\RR$. 


Note  that, since the coordinates $x_1, ..., x_n$ are positive, the  monomials $x^{s_i}$ are well-defined even if the coordinates of the exponent vectors $s_i$ are arbitrary real numbers (i.e., $s_1, ..., s_m$ are not necessarily in $\ZZ_{\ge 0}^n$). In that case we say that~(\ref{polynomial_1}) is a {\em power-law dynamical system}. The approaches and results discussed in this paper apply not only to polynomial dynamical systems, but also to power-law dynamical systems. In this paper, whenever we say ``polynomial dynamical system", we mean ``polynomial or power law dynamical system". 

In many applications  there are also some positive parameter values in these systems, which may be difficult to estimate accurately (such as reaction rate constants in biochemistry and chemical engineering, or interaction rates in epidemiology and ecology). Then the dynamical system of interest may have the form 
\begin{equation}\label{polynomial_k}
\frac{dx}{dt} = \sum_{i = 1}^m k_i x^{s_i} v_i 
\end{equation}
where $k_1, ..., k_m$ are some positive constants. In this case, we may want to know if some properties of the solutions of the system (\ref{polynomial_k}) may hold {\em for all} choices of positive parameters $k_i$. 

In other cases, the interaction network we need to model is part of a larger network that contains variables or ``external factors" that influence our system, but are not contained in our system. In that case we cannot use an autonomous dynamical system as a model, but we may be able to use a nonautonomous dynamical system of the form
\begin{equation}\label{polynomial_nonauto}
\frac{dx}{dt} = \sum_{i = 1}^m k_i(t) x^{s_i} v_i 
\end{equation}
where the functions $k_i$ are positive and uniformly bounded, i.e., there exists some $\eps>0$ such that $\eps \le k_i(t) \le \frac{1}{\eps} $ for all $t$.
We will refer to models of the form (\ref{polynomial_nonauto}) as {\em variable-$k$ polynomial dynamical systems}.

In applications, there is great interest in understanding the {\em global stability} and {\em persistence} properties of dynamical systems of the form (\ref{polynomial_k}) and (\ref{polynomial_nonauto}). For example, a natural question is the following: for what systems (\ref{polynomial_nonauto}) is it true that all solutions have a positive lower bound for all $t>0$ (i.e., no variable ``goes extinct"), irrespective of the choices of uniformly bounded external factors $k_i(t)$? 

In this paper we  describe an approach for analyzing such problems, even in the presence of unknown parameters (as in (\ref{polynomial_k})) or external factors (as in (\ref{polynomial_nonauto})). In Section 2 we show that any polynomial dynamical system can be regarded as being generated by some ``Euclidean embedded graph" (also called ``E-graph"). In Section 3 we introduce the notion of ``toric differential inclusion", and we show that if an E-graph is reversible, then any (variable-$k$) polynomial dynamical system generated by it can be embedded into a toric differential inclusion. Then, in Section 4 we show that such an embedding still exist even if the reversibility restriction is relaxed significantly. In Section 5 we discuss how these embeddings may greatly simplify the analysis of some properties of polynomial dynamical systems.

\section{Euclidean embedded graphs}

A {\em Euclidean embedded graph} (or {\em E-graph}) is a finite oriented graph $\G = (V,E)$ whose vertices are labeled by distinct elements of $\RR^n$ for some $n \ge 1$. With an abuse of notation, we identify the set $V$ with the set of vertex labels, i.e., we assume that $V \subset \RR^n$.
Moreover, we associate to each edge $e = (s,t) \in E$  its {\em edge vector} $v(e) = t-s$. Also, we define its {\em source vertex} to be $s(e) = s$, and its  {\em target vertex} to be $t(e) = t$. 

\smallskip

Given an Euclidean embedded graph $\G = (V,E)$, the {\em polynomial dynamical  systems generated by $\G$} are the dynamical systems on $\RR^n_{> 0}$ given by 
\begin{equation}\label{polynomial_G}
\frac{dx}{dt} = \sum_{e \in E} k_e x^{s(e)} v(e) 
\end{equation}
for some positive constants $k_e$. Note that if $V \subset \ZZ^n_{\ge 0}$ then (\ref{polynomial_G}) is just {\em mass-action kinetics} for a chemical reaction network represented by $\G$, i.e, one where there is a reaction of the form $s(e) \to t(e)$ for each edge $e \in E$. (Informally speaking, for mass-action systems the rate of each reaction is proportional to the product of the concentrations of all its reactants, i.e., the rate of the reaction $s(e) \to t(e)$ is proportional to $x^{s(e)}$; see~\cite{CNP, Feinberg_1979, gunawardena, Savageau_Voit} for more details.) 

More generally, the {\em variable-k polynomial dynamical systems generated by $\G$} are the (nonautonomous) dynamical systems on $\RR^n_{> 0}$ given by 
\begin{equation}\label{polynomial_G_nonaut}
\frac{dx}{dt} = \sum_{e \in E} k_e(t) x^{s(e)} v(e) 
\end{equation}
such that there exists some $\eps>0$ for which we have $\eps \le k_e(t) \le \frac{1}{\eps}$ for all $e \in E$ and for all  $t$.

\smallskip

Let us note that for any variable-$k$ polynomial dynamical system (\ref{polynomial_nonauto})
we can construct an E-graph $\G$ that generates it, and $\G$ is {\em not} unique. Assuming that the ordered pairs $(s_1, v_1), (s_2, v_2), ..., (s_m, v_m)$ are distinct, the simplest way to construct such a $\G$ is to choose the set of vertices 
$$
V = \{ s_i \ | \ i=1,...,m \} \cup \{ s_i+v_i \ | \ i=1,...,m \}, 
$$
and the set of edges
$$
E = \{ (s_i, s_i+v_i) \ | \ i=1,...,m \}.
$$
If we want to obtain a different  E-graph that generates~(\ref{polynomial_nonauto}),  we can, for example, write one of the vectors $v_i$ as a positive linear combination of two different nonzero vectors, and use these new vectors to obtain a graph with  $m+1$ edges that also generates (\ref{polynomial_nonauto}).

\noindent

\medskip

We will use E-graphs in order to try to identify the polynomial dynamical systems that are known to have (or are conjectured to have) important dynamical properties, such as persistence, permanence, and global stability. For this purpose, we first define some special kinds of E-graphs (namely, reversible and weakly reversible E-graphs), and then we focus our attention on polynomial dynamical systems that are generated by these special kinds of graphs.

\medskip

We say that an E-graph $\G = (V,E)$ is {\em reversible} if for any edge $(s,t)$ in $E$ the reverse edge $(t,s)$ also belongs to $E$. Also we say that $\G$ is {\em weakly reversible} if any edge $(s,t)$ is part of an oriented cycle in $\G$, or equivalently, any connected component of $\G$ is strongly connected (a {\em strongly connected} directed graph is one where there exists a directed path between every pair of vertices). For example, graph in Fig.~\ref{fig:1}$(e)$ is reversible, and the graphs in Fig.~\ref{fig:1}$(c)$ and $(d)$ are weakly reversible. Of course, every reversible E-graph is also weakly reversible.

We say that a polynomial dynamical system is {\em reversible} if there exists some reversible E-graph that generates it, and, we say that a polynomial dynamical system is {\em weakly reversible} if there exists some weakly reversible E-graph that generates it. Analogously, we define variable-$k$ reversible and weakly reversible polynomial dynamical systems.

\begin{figure}[h!]
  \begin{center}
    \begin{tabular}{c}
    \hskip-1.4cm
      $(a)$ 
      \includegraphics[width=2.01in]{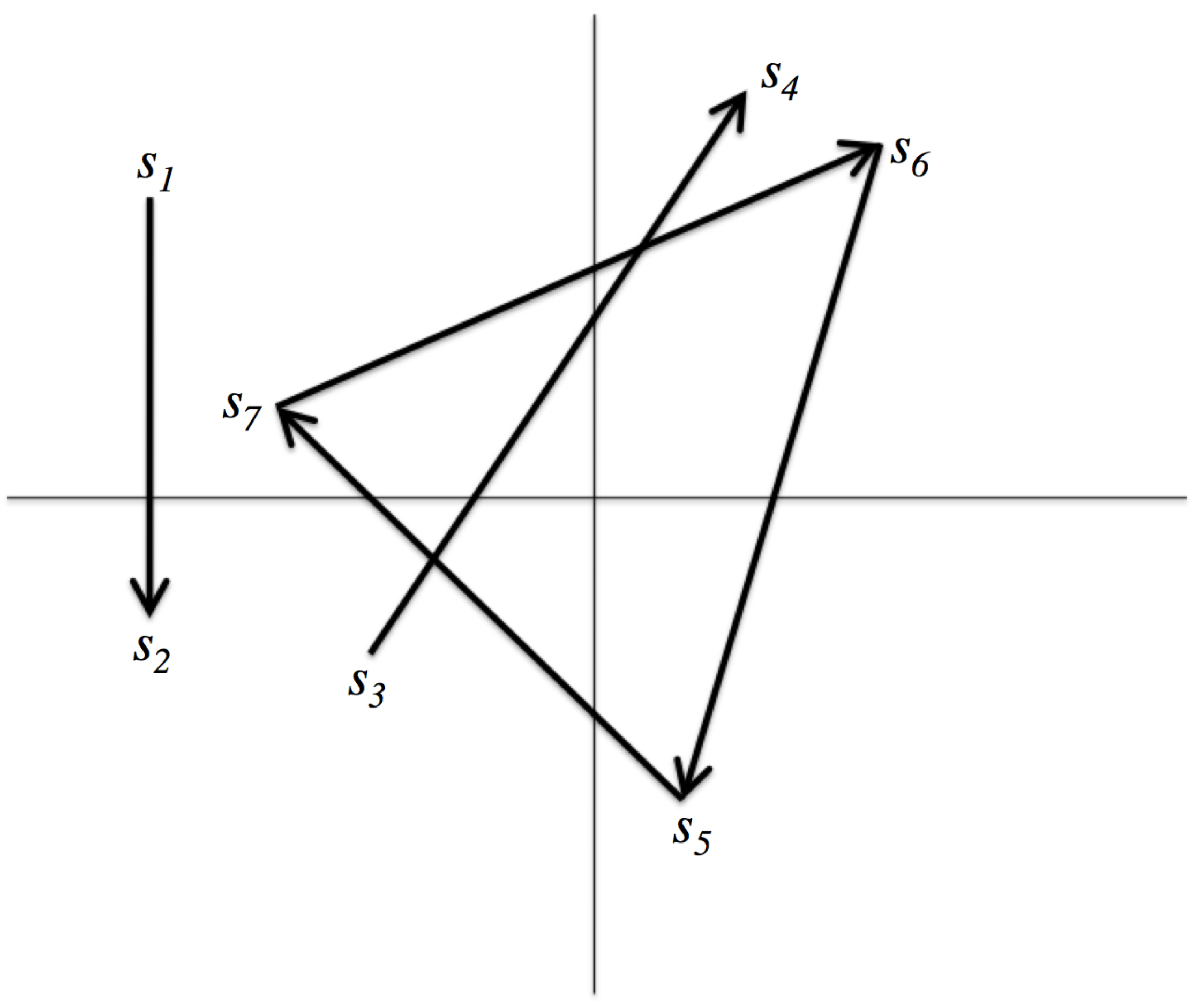}
      $(b)$ 
      \ \ \includegraphics[width=2.01in]{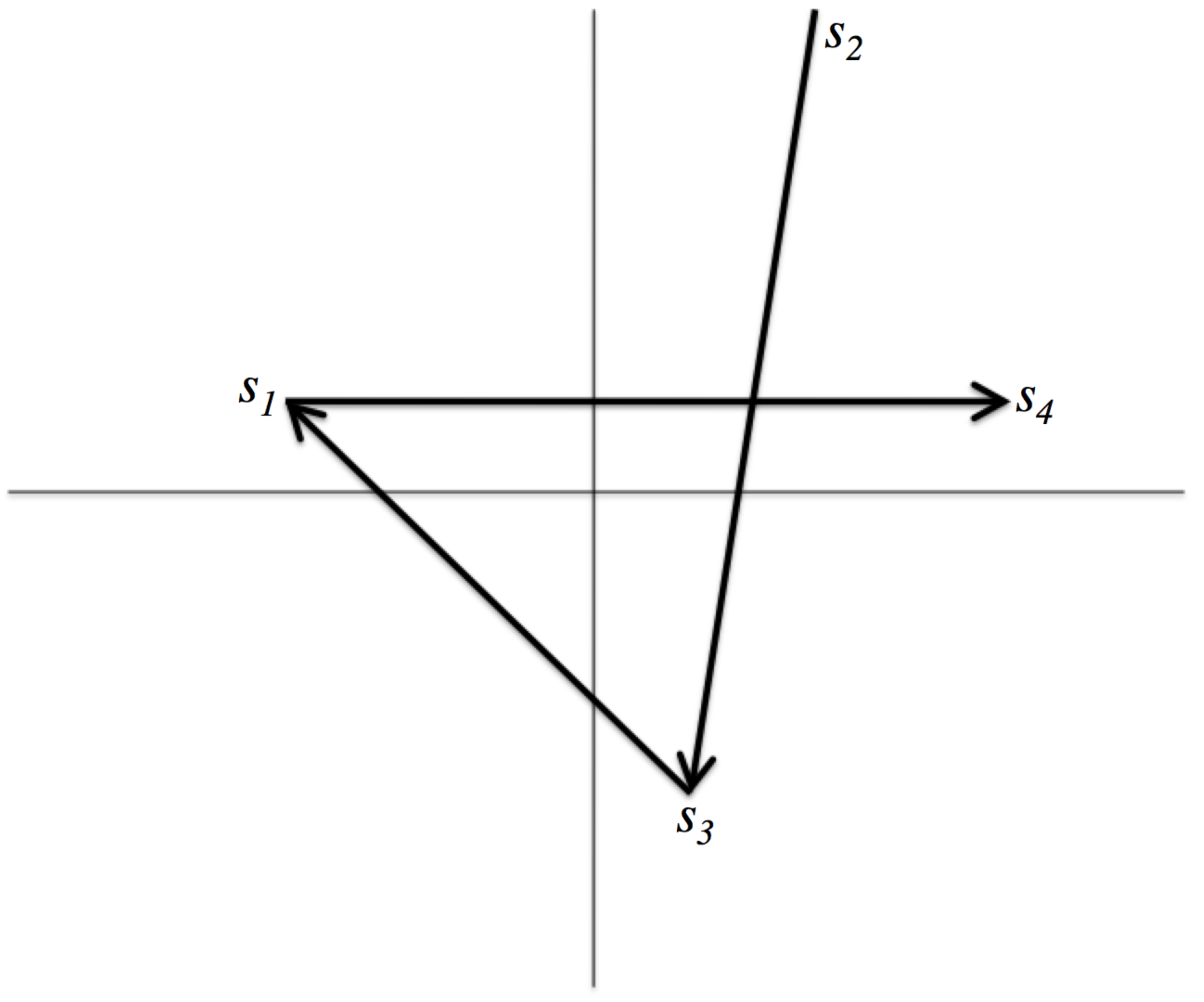}
      $(c)$ 
      \ \ \includegraphics[width=2.01in]{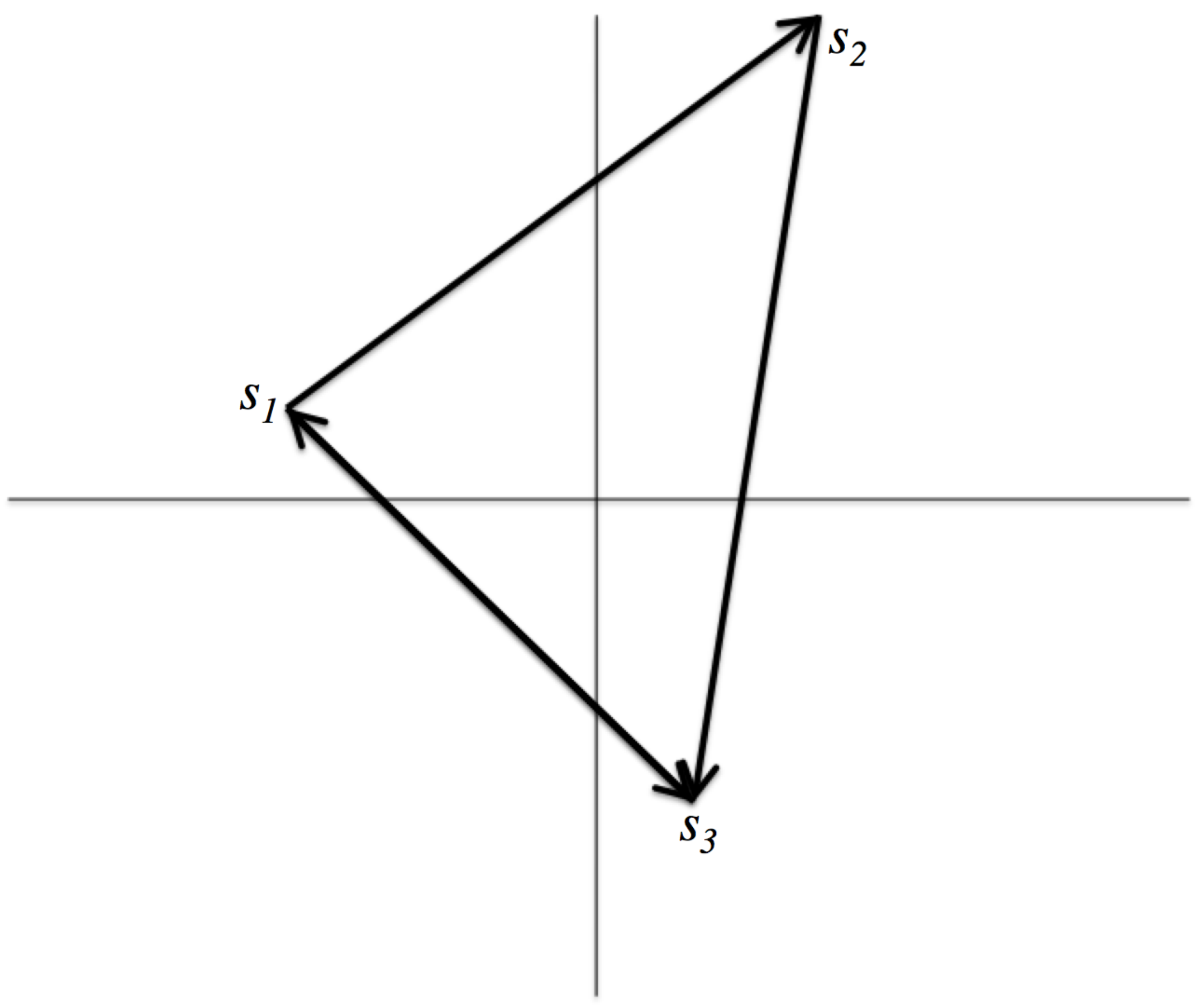} \\
    \hskip-1.4cm
      $(d)$ 
      \includegraphics[width=2.01in]{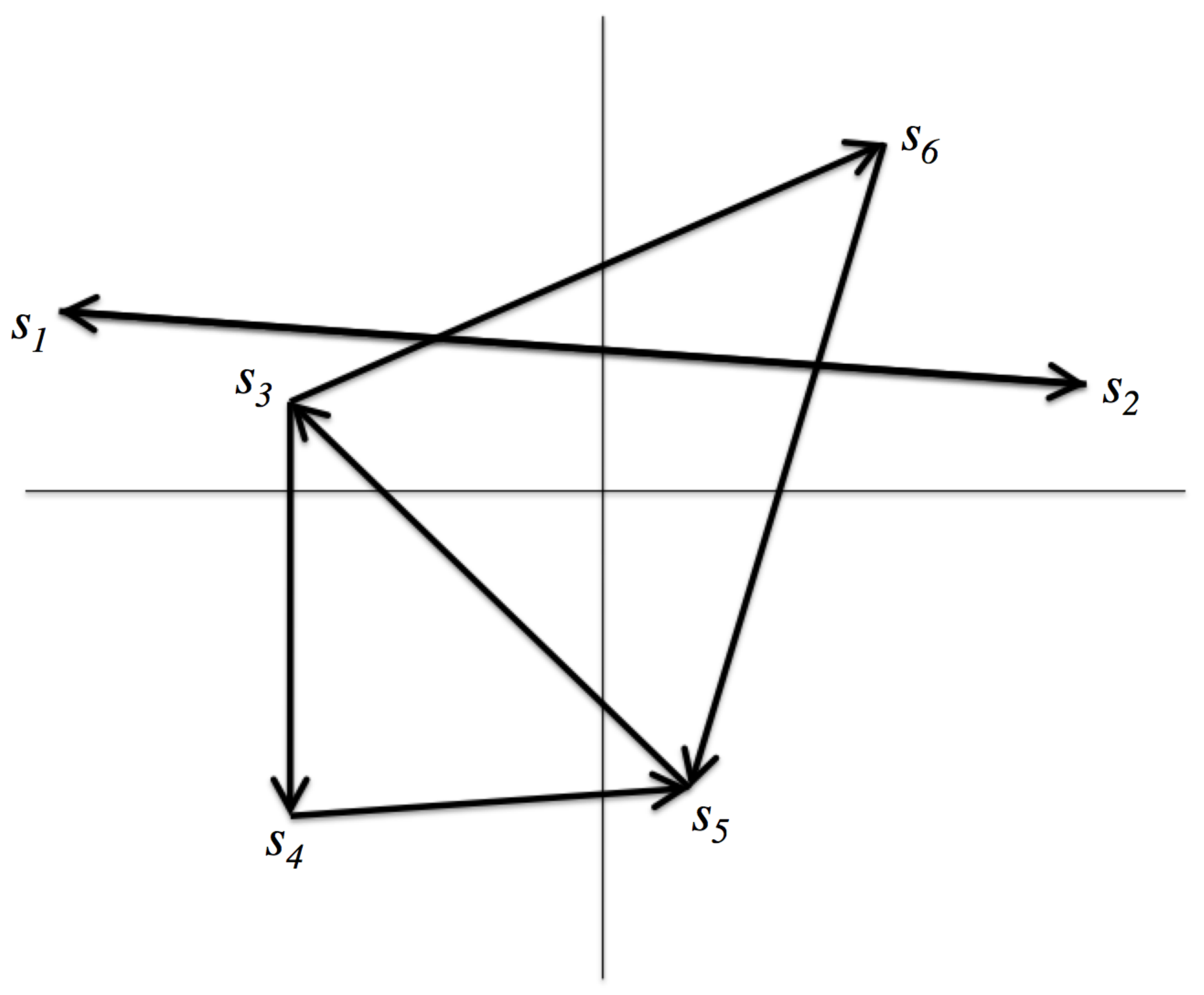}
      \ \ \ \ \ \ \ \ \ 
      $(e)$ 
      \includegraphics[width=2.01in]{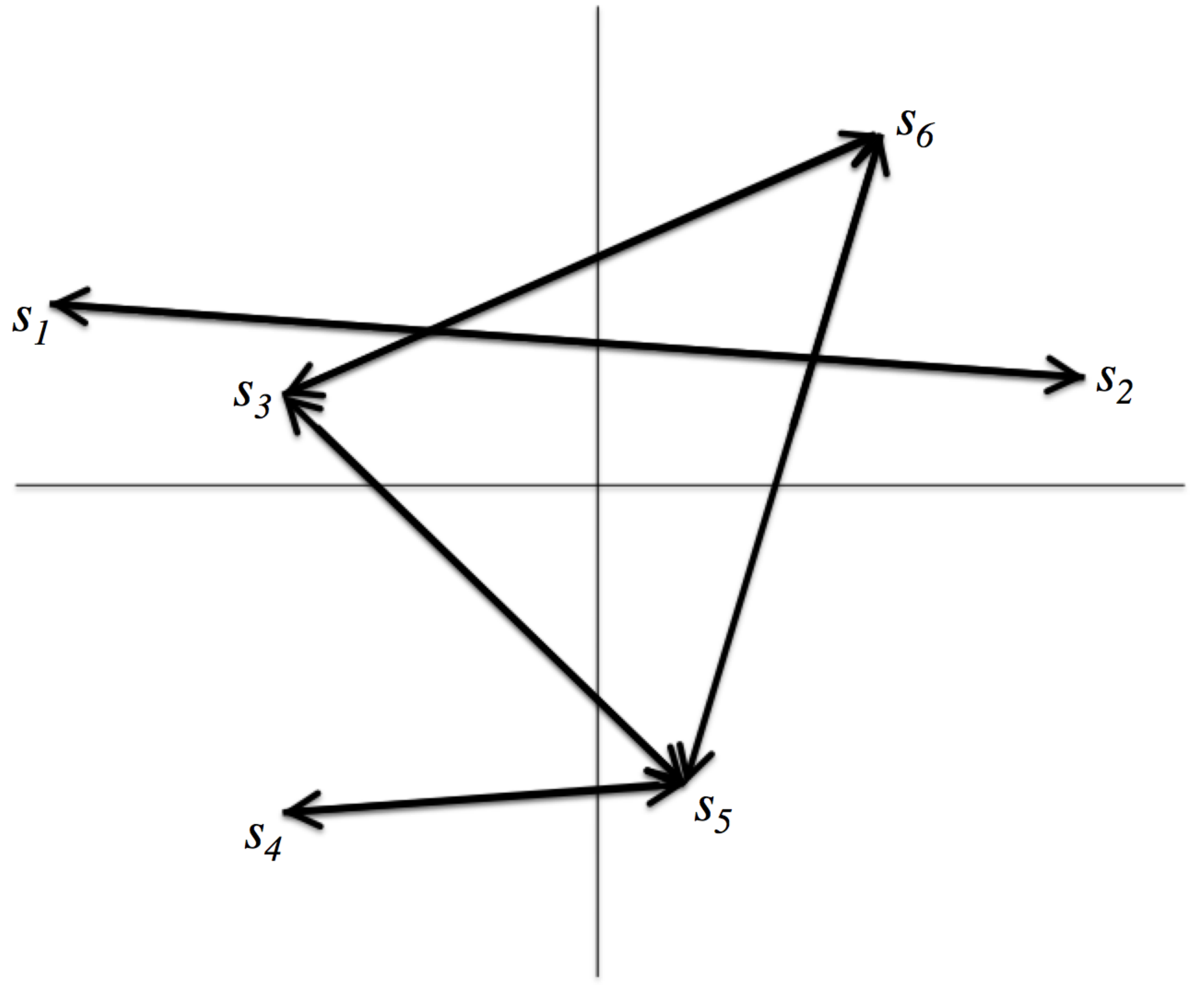}
    \end{tabular}
  \end{center}
  \caption{\label{fig:1}{\it Five examples of E-graphs in $\RR^2$. For each graph, while we {\em do} assume that the points $s_1, ..., s_m\in\RR^2$ are distinct, note that we do {\em not} assume that the line segments (i.e., arrows) representing the vectors $s_j - s_i$ are disjoint. The  graphs $(d)$ and $(e)$ are  {\em weakly reversible}, and the  graph $(e)$ is actually {\em reversible}.
Although the graph $(b)$ is not weakly reversible, it generates dynamical systems~(\ref{polynomial_G}) which {\em can also be represented by a weakly reversible graph} (e.g., the graph $(c)$), because the vector $s_4 - s_1$ is a positive linear combination of the vectors $s_2 - s_1$ and $s_3 - s_1$, so the term corresponding to the edge $(s_1,s_4)$ can be replaced by two terms, one corresponding to the edge $(s_1,s_2)$, and the other corresponding to the edge $(s_1,s_3)$. On the other hand, the dynamical systems generated by the graph $(a)$ {\em cannot} be generated by a weakly reversible graph~\cite{ABCM_2018}.}}
\end{figure}

\bigskip

\noindent 
{\bf Example 1.} Consider the dynamical system given by
\begin{eqnarray}\label{example_1}
\frac{dx_1}{dt}  & =  & -2k_1(t) x_1^2 + 2k_2(t) x_2 \\ \nonumber
\frac{dx_2}{dt}  & =  &   k_1(t) x_1^2  -  k_2(t) x_2
\end{eqnarray}
for some  functions $k_i(t)$ with $\eps < k_i(t) < \frac{1}{\eps}$ for all $t$.

This system can be written in vector form, as follows:
\begin{eqnarray}\label{example_1(2)}
\frac{dx}{dt} = k_1(t) x_1^2 \binom{-2}{1} + k_2(t) x_2 \binom{2}{-1},
\end{eqnarray}
where $x = \binom{x_1}{x_2}$. In turn, this can be written in the form (\ref{polynomial_nonauto}), as follows:
\begin{eqnarray}\label{example_1(4)}
\frac{dx}{dt} = k_1(t) x^{s_1} \binom{-2}{1} + k_2(t) x^{s_2} \binom{2}{-1},
\end{eqnarray}
where $s_1 = \binom{2}{0}$ and $s_2 = \binom{0}{1}$. Then, the simplest E-graph $\G$ that generates the dynamical system (\ref{example_1}) has two edges, one edge going from $s_1$ to $s_1' := s_1 + \binom{-2}{1}$, and the other edge going from $s_2$ to $s_2' := s_2 + \binom{2}{-1}$. But, note that we happen to  have $s_1' = s_2$ and $s_2' = s_1$, so the graph $\G$ actually has only two vertices, and is reversible. The graph $\G$ is shown in Fig.~\ref{fig:3}$(a)$ in Section 3. We will return to this kind of example in section 3, when we will see that the dynamics of this 
system can be understood by embedding it into a special kind of differential inclusion.

\subsection{Persistent, permanenent, and globally stable polynomial dynamical systems}

%
We say that a \text{variable-$k$} polynomial dynamical system in $\RR^n_{> 0}$ is {\em persistent} if, for any solution $x(t)$ defined on an interval $I$ that contains $t=0$, there exists some $\eps_0>0$ such that 
$$
x_i(t) > \eps_0  \ \mathrm{\ for \ all \ }  i \in \{ 1,...,n\}  \mathrm{\ and \ for \ all \ } t \in I \cap [0,\infty).
$$
In other words, the system is called persistent if, for any solution $x(t)$ with positive initial condition, there exists a positive lower bound for all the variables $x_i(t)$, and for all the future times at which the solution is defined (we cannot say that there exists a positive lower bound for all $t>0$ for a technical reason: some solutions may blow up in finite time). Informally, persistence means that ``no variable goes extinct".

To define the permanence property we first need to point out that polynomial dynamical systems  have some special invariant spaces. If a (variable-$k$) polynomial dynamical system is given by (\ref{polynomial_G}) or (\ref{polynomial_G_nonaut}), then its {\em  edge space} $S$ is the linear span of the set of edge vectors $\{v(e) \, | \, e \in E\}$. Then we define its {\em affine invariant sets} to be the sets of the form 
$$
(x_0+S) \cap \RR^n_{> 0},  \ \mathrm{\ for \ some \ } x_0 \in \RR^n_{> 0}.
$$
These are indeed invariant spaces for  solutions of  (\ref{polynomial_G}) or (\ref{polynomial_G_nonaut}) on the domain $\RR^n_{> 0}$, because all the vectors that appear on the right-hand side of these equations are linear combinations of $v_1,...,v_m$, so are contained in $S$. 

We say that a (variable-$k$) polynomial dynamical system on $\RR^n_{> 0}$ is {\em permanent} if, for each affine invariant set $S_{x_0} = (x_0+S) \cap \RR^n_{> 0}$ there exists a compact set $K_{x_0} \subset S_{x_0}$ such that any solution $x(t)$ with $x(0) \in S_{x_0}$ can be extended for all $t>0$, and there exists some $t_0>0$ such that $x(t) \in K_{x_0}$ for all $t>t_0$.
It follows that, for a permanent system, all solutions that start in $S_{x_0}$ can be extended for all $t>0$, and, for large enough $t$, they are bounded above and below by some positive constants (and these positive upper and lower bounds do not depend on the initial condition, while for persistent systems the lower bound $\eps$ may depend on the initial condition). In particular, permanence implies persistence. 

Also, we say that a polynomial dynamical system (\ref{polynomial_k}) {\em has a  globally attracting point} within the affine invariant set $S_{x_0}$ if there exists a point $\bar x_0 \in S_{x_0}$ such that any solution $x(t)$ with $x(0) \in S_{x_0}$ can be extended for all $t>0$ and we have 
$$
\lim_{t \to \infty} x(t) = \bar x_0.
$$

Also, we say that a polynomial dynamical system (\ref{polynomial_k}) is {\em vertex balanced} if there exists an E-graph $\G = (V,E)$ that generates our system as in (\ref{polynomial_G}), and there exists a point $\bar x \in \RR^n_{> 0}$ such that for any vertex $s \in V$ we have 
\begin{equation}\label{VBE}
\sum_{e = (s,s') \in E} k_e \bar x^{s} =  \sum_{e = (s',s) \in E} k_e \bar x^{s'}. 
\end{equation}
In other words, if we think of the positive number $k_{e}x^{s(e)}$ as the rate of a flow along the edge $e$ (i.e., a flow from the vertex $s(e)$ to the vertex $t(e)$), then condition (\ref{VBE}) says that, if $x=\bar x$, then at each vertex of the graph $G$, the sum of all the incoming flows equals the sum of all outgoing flows. 

The notion of ``vertex balanced polynomial dynamical system" (or ``vertex balanced power-law dynamical system") is a natural generalization of the notion of ``toric dynamical system", which in turn was a reformulation of the notion of ``complex balanced mass-action system", which was introduced by Fritz Horn and Roy Jackson in their seminal work on models of reaction networks with mass action kinetics ~\cite{Horn_Jackson}. For more details see~\cite{Craciun_GAC, TDS, Feinberg_1979, gunawardena}. This notion ultimately originates in the work of Boltzmann~\cite{Boltzmann_1887, Boltzmann_1896}. For some recent connections between  polynomial dynamical systems, reaction networks, and the Bolzmann equation, see~\cite{Craciun_Tran}.

%
%

\subsection{Open problems}

We can now formulate the following conjectures, inspired by analogous conjectures that have been formulated for mass-action systems \cite{CNP, TDS, Horn_Jackson, Horn_1974}, and are widely regarded as the key open problems in this field.

\medskip

{\bf Global Attractor Conjecture.} Any vertex balanced polynomial dynamical system has a globally attracting point within any affine invariant set.

\medskip

{\bf Extended Persistence Conjecture.} Any variable-$k$ weakly reversible polynomial dynamical system is persistent. 

\medskip

{\bf Extended Permanence Conjecture.} Any variable-$k$ weakly reversible polynomial dynamical system is permanent.

\bigskip

The global attractor conjecture is the oldest and best known of these conjectures, and has resisted efforts for a proof for over four decades, but proofs of many special cases have been obtained during this time, for example~\cite{Anderson_Shiu_2010, Anderson_2011, CNP, TDS, persistence2, ShiuSturmfels, siegel_maclean, Sontag1}.
The conjecture originates from the 1972 breakthrough work by Horn and  Jackson~\cite{Horn_Jackson}, and was formulated by Horn in 1974~\cite{Horn_1974}.

\noindent
Recently, Craciun, Nazarov and Pantea~\cite{CNP} have  proved the three-dimensional case of this conjecture, and Pantea has generalized this result for the case where the dimension of the linear invariant subspaces is at most three~\cite{persistence2}. Using a different approach, Anderson has proved the conjecture under the additional hypothesis that the graph $G$ has a single connected component~\cite{Anderson_2011}, and this result has been generalized by Gopalkrishnan, Miller, and Shiu for the case where the graph $G$ is strongly endotactic~\cite{Gopalkrishnan_Miller_Shiu_2013}. A proof of the global attractor conjecture in full generality (using as a main tool the embedding of weakly reversible polynomial dynamical systems into  toric differential inclusions, which is the main topic of this paper) has been proposed in \cite{Craciun_GAC}.

Note that all three conjectures above relate to weakly reversible polynomial dynamical systems. Indeed, it is  known that if the vertex balance condition (\ref{VBE}) is satisfied, then it follows that the E-graph $\G$  must be weakly reversible \cite{TDS, Feinberg_1979, Horn_Jackson}. 
Moreover, all these conjectures are strongly related to some version of the persistence property; in particular, it is  known that a proof of the Global Attractor Conjecture would follow if we could show that vertex balanced polynomial dynamical systems are persistent~\cite{Craciun_GAC, CNP, siegel_maclean, Sontag1}.   

In the next section we introduce toric differential inclusions, in order to facilitate the analysis of persistence properties of variable-$k$ weakly reversible polynomial dynamical systems. Indeed, we will see that the analysis of some properties of these nonautonomous systems can be reduced to the analysis of toric differential inclusions, which are not only autonomous (i.e., their right-hand sides are constant in $t$), but are also piecewise constant in $x$.

\section{Toric differential inclusions}

Given an E-graph $\G = (V,E)$, let us write $s \to s' \in E$ if $(s,s')$ is an edge of  $\G$; also let us write $s \rightleftharpoons s' \in E$ if both  $(s,s')$  and $(s',s)$  are edges of  $\G$. 

\medskip

Then, if $\G$ is reversible,  the dynamical system (\ref{polynomial_G_nonaut}) can be written as 
\begin{equation}\label{kvREV}
\frac{dx}{dt} = \sum_{s \rightleftharpoons s'\in E} \left( k_{s\to s'}(t)x^{s} - k_{s' \to s}(t)x^{s'} \right) (s' - s), 
\end{equation}
by grouping together pairs of terms given by an edge $s \to s'$ and its reverse $s' \to s$. In particular, if $\G$ consists of a {\em single} reversible edge $s\rightleftharpoons s'$, then we obtain 
\begin{equation}\label{kvREV_1}
\frac{dx}{dt} = \left( k_{s\to s'}(t)x^{s} - k_{s' \to s}(t)x^{s'} \right) (s' - s).
\end{equation}
Note that we can understand the dynamics of the system (\ref{kvREV_1}), if we think of it as a ``tug-of-war" between the forward and reverse terms, i.e., the positive and the negative monomials in (\ref{kvREV_1}). Indeed, both the forward and the reverse terms are trying to ``pull" the state $x(t)$ of the system along the same line (parallel to the vector $s' - s$), but in opposite directions. Recall that $\eps < k_e(t) < \frac{1}{\eps}$ for all $t$. Then, the domain $\RR^n_{> 0}$ can be partitioned into three regions: the region where the inequality $\eps x^{s} > \frac{1}{\eps} x^{s'}$ holds (which implies $k_{s\to s'}(t)x^{s} > k_{s'\to s}(t)x^{s'}$), the region where the inequality $\frac{1}{\eps} x^{s} > \eps x^{s'}$ holds (which implies $k_{s\to s'}(t)x^{s} > k_{s'\to s}(t)x^{s'}$), and an {\em uncertainty region} where neither one of these two inequalities are satisfied, and either one of the two terms $k_{s\to s'}(t)x^s$ and $k_{s'\to s}(t)x^{s'}$ may win the tug-of-war, or there can be a tie, due to the fact that $k_{s\to s'}(t)$ and $k_{s'\to s}(t)$ may take any values between $\eps$ and $\frac{1}{\eps}$. 

We will now show that the system (\ref{kvREV_1}) can be embedded into a piecewise constant differential inclusion defined using a partition of $\RR^n$ into three corresponding (but simpler) regions, related to the  ones above via a logarithmic transformation.

Indeed, let us define by $l_{s'-s}$ the line through the origin in $\RR^n$ generated by the vector ${s'-s}$, and by $l_{s'-s}^- \subset l_{s'-s}$ the ray starting from the origin in the direction ${s'-s}$, and by $l_{s'-s}^+ \subset l_{s'-s}$ the ray starting from the origin in the direction ${s-s'}$. Also, let us denote by $H$ the hyperplane through the origin orthogonal to ${s'-s}$. For some $\delta>0$ define the set-valued function $F_{H,\delta}$ at $X \in \RR^n$, as follows: 

%
\[
 F_{H,\delta}(X) =
  \begin{cases} 
        \hfill l_{s'-s}^+    \hfill & \text{ if  \ \ $dist(X,H) > \delta$ and $X \cdot (s'-s) > 0$} \\
      \hfill l_{s'-s}^- \hfill & \text{ if \ \  $dist(X,H) > \delta$ and $X \cdot (s'-s) < 0$} \\
      \hfill l_{s'-s}    \hfill & \text{ if \ \ $dist(X,H) \le \delta$} \\
  \end{cases}
\]

\noindent
Recall that $k_{s\to s'}(t), k_{s'\to s}(t) \in [\eps, 1/\eps]$ for all $t$. We have:

\begin{lem}\label{lem_1}
The dynamical system (\ref{kvREV_1}) (which is given by an E-graph that consists of a single reversible edge $s \rightleftharpoons s'$) is embedded in the differential inclusion 
\begin{equation}\label{tdi_1}
\frac{dx}{dt} \in F_{H,\delta}(\log x),
\end{equation}
where 
$\delta = \frac{2 |\log\eps| }{||s'-s||}$.
\end{lem}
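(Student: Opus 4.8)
The plan is to verify directly that any solution $x(t)$ of (\ref{kvREV_1}) satisfies (\ref{tdi_1}), i.e.\ that the velocity vector lies in the prescribed set $F_{H,\delta}(\log x)$ at every time. First I would record the structural observation that the right-hand side of (\ref{kvREV_1}) is a \emph{scalar} multiple of the fixed vector $s'-s$: writing $c(t) = k_{s\to s'}(t)x^{s} - k_{s'\to s}(t)x^{s'}$, we have $dx/dt = c(t)(s'-s)$. Consequently $dx/dt$ always lies on the line $l_{s'-s}$, which immediately disposes of the third case in the definition of $F_{H,\delta}$ (where $\dist(\log x, H)\le\delta$ and the prescribed set is all of $l_{s'-s}$). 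The entire content of the lemma is therefore to show that in the two ``far'' regions the \emph{sign} of $c(t)$ is forced, and that this sign places $dx/dt$ on the correct ray.

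Next I would translate the geometric conditions defining the regions through the logarithmic change of variables $X = \log x$. Computing the inner product gives
$$
X\cdot(s'-s) = \sum_i (\log x_i)(s_i'-s_i) = \log x^{s'} - \log x^{s},
$$
and since $H$ is orthogonal to $s'-s$ we have $\dist(X,H) = |X\cdot(s'-s)|/\|s'-s\|$. With the stated value $\delta = 2|\log\eps|/\|s'-s\|$, the condition $\dist(\log x,H) > \delta$ becomes exactly $|\log x^{s'} - \log x^{s}| > 2|\log\eps| = 2\log(1/\eps)$, i.e.\ either $x^{s'} > x^{s}/\eps^{2}$ or $x^{s'} < \eps^{2}x^{s}$ according to the sign of $X\cdot(s'-s)$.

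Finally I would carry out the two symmetric sign computations. In the region where $\dist(\log x,H) > \delta$ and $X\cdot(s'-s) > 0$, we have $x^{s'} > x^{s}/\eps^{2}$; combining this with the rate bounds $k_{s\to s'}(t)\le 1/\eps$ and $k_{s'\to s}(t)\ge\eps$ yields
$$
c(t) \le \tfrac{1}{\eps}x^{s} - \eps\, x^{s'} < \tfrac{1}{\eps}x^{s} - \eps\cdot\tfrac{x^{s}}{\eps^{2}} = 0,
$$
so $dx/dt = c(t)(s'-s)$ points in the direction $s-s'$ and hence lies in $l_{s'-s}^{+}$, matching the prescription. The region with $X\cdot(s'-s)<0$ is handled identically, with the inequalities reversed, giving $c(t)>0$ and $dx/dt\in l_{s'-s}^{-}$.

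The computation is elementary, so I do not expect a genuine obstacle; the only point requiring care — and the crux of the whole construction — is confirming that the factor $2$ in $\delta$ is calibrated correctly. It is precisely this factor that converts the worst-case product of the rate bounds $(1/\eps)\cdot(1/\eps)=1/\eps^{2}$ into the logarithmic threshold $2|\log\eps|$, making the sign of $c(t)$ definite on each far region. Any smaller choice of $\delta$ would leave the sign genuinely ambiguous, since the $k_e(t)$ may independently take any values in $[\eps,1/\eps]$, so the bound is tight.
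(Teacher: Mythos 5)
Your proof is correct and takes essentially the same route as the paper's: the same three-case analysis keyed to the definition of $F_{H,\delta}$, with the two far regions settled by translating $\dist(\log x,H) > \delta$ into the monomial inequality $x^{s'-s} > \eps^{-2}$ (or its reverse) and combining it with the worst-case rate bounds $k_e(t) \in [\eps, 1/\eps]$. The only cosmetic difference is direction: you argue forward from the distance condition to the sign of the scalar coefficient $c(t)$, while the paper reduces the required sign condition backwards to the same inequality (\ref{133}).
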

\begin{proof}
If $dist(\log x,H) \le \delta$ there is nothing to be proved, because we already know that  the right-hand side of (\ref{kvREV_1}), i.e., the vector
$$
\left( k_{s\to s'}(t)x^{s} - k_{s' \to s}(t)x^{s'} \right) (s' - s)
$$
belongs to $l_{s'-s}$.

If $dist(\log x,H) > \delta$ and $(\log x) \cdot (s'-s) > 0$, we need to show that the right-hand side of (\ref{kvREV_1}) belongs to $l_{s'-s}^+$, i.e., we need to show that 
$$
k_{s\to s'}(t)x^{s} < k_{s' \to s}(t)x^{s'}.
$$
For this, it is sufficient to show that $\frac{1}{\eps}x^{s} < \eps x^{s'}$, which is equivalent to $x^{s'-s} > \eps^{-2}$, and, by taking logarithm on both sides of this inequality, it can also be written as 
\begin{equation}\label{133}
(\log x) \cdot (s'-s) > -2\log\eps.
\end{equation}

On the other hand, $dist(\log x,H)$  is just the dot product between the vector $\log x$ and the unit vector that is orthogonal to $H$ and on the same side of $H$ as $\log x$. But, since $(\log x) \cdot (s'-s) > 0$, this unit vector is just $\frac{s'-s}{||s'-s||}$. Then the inequality $dist(\log x,H) > \delta$ implies
\begin{equation}\label{144}
(\log x) \cdot \frac{s'-s}{||s'-s||} > \delta.
\end{equation}
Finally, given that $\delta = \frac{2 |\log\eps| }{||s'-s||}$ and $\eps\in(0,1)$, we can see that the inequalities (\ref{133}) and (\ref{144}) are equivalent. 

The case where $dist(\log x,H) > \delta$ and $(\log x) \cdot (s'-s) < 0$ is analogous, so this concludes the proof.
\end{proof}

\begin{figure}[H]
  \begin{center}
    \begin{tabular}{c}
    \hskip-1.5cm
      $(a)$ \hskip-0.1cm \includegraphics[width=1.7in]{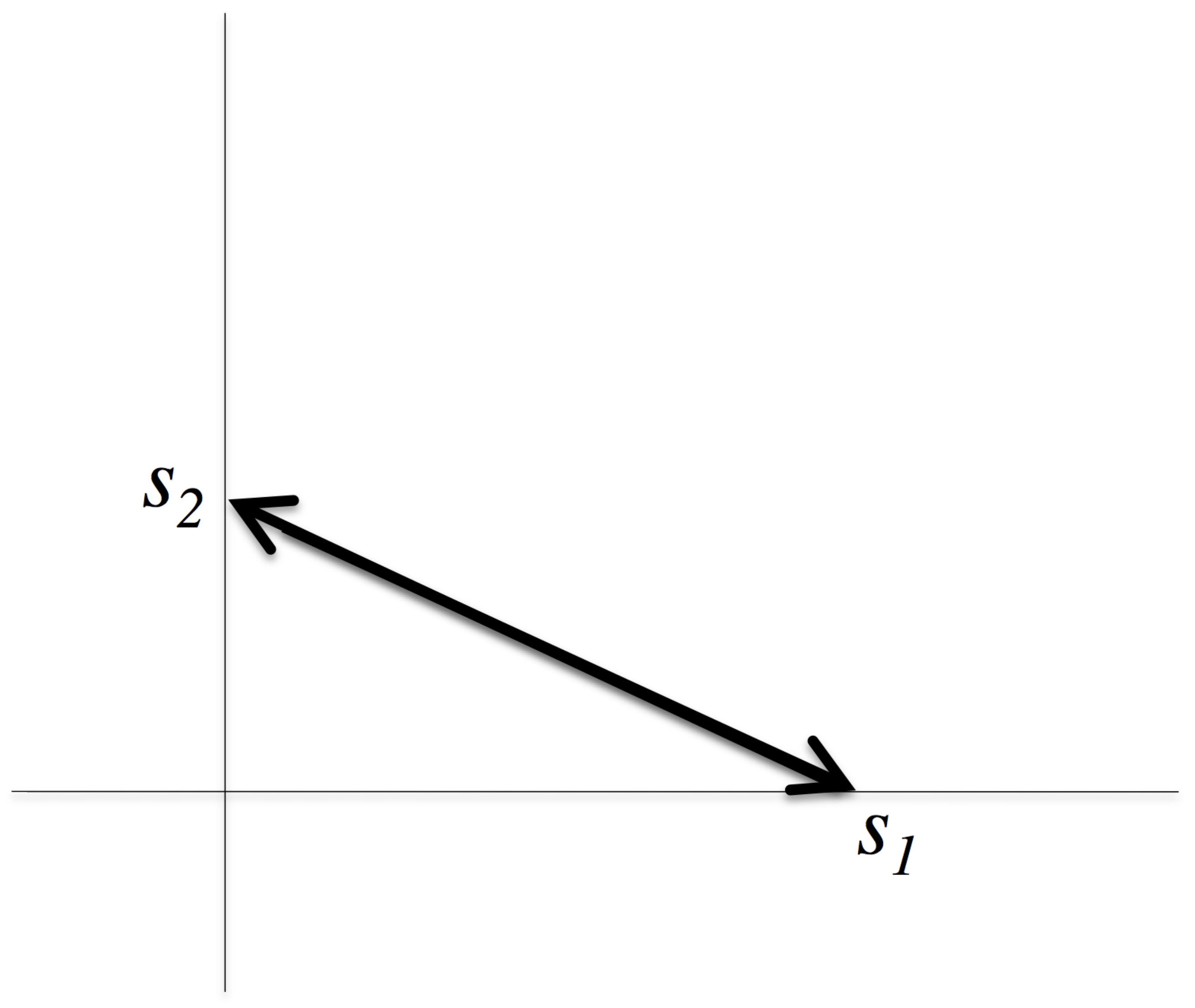} \ \ \ \ 
      $(b)$ \hskip-0.1cm  \includegraphics[width=1.7in]{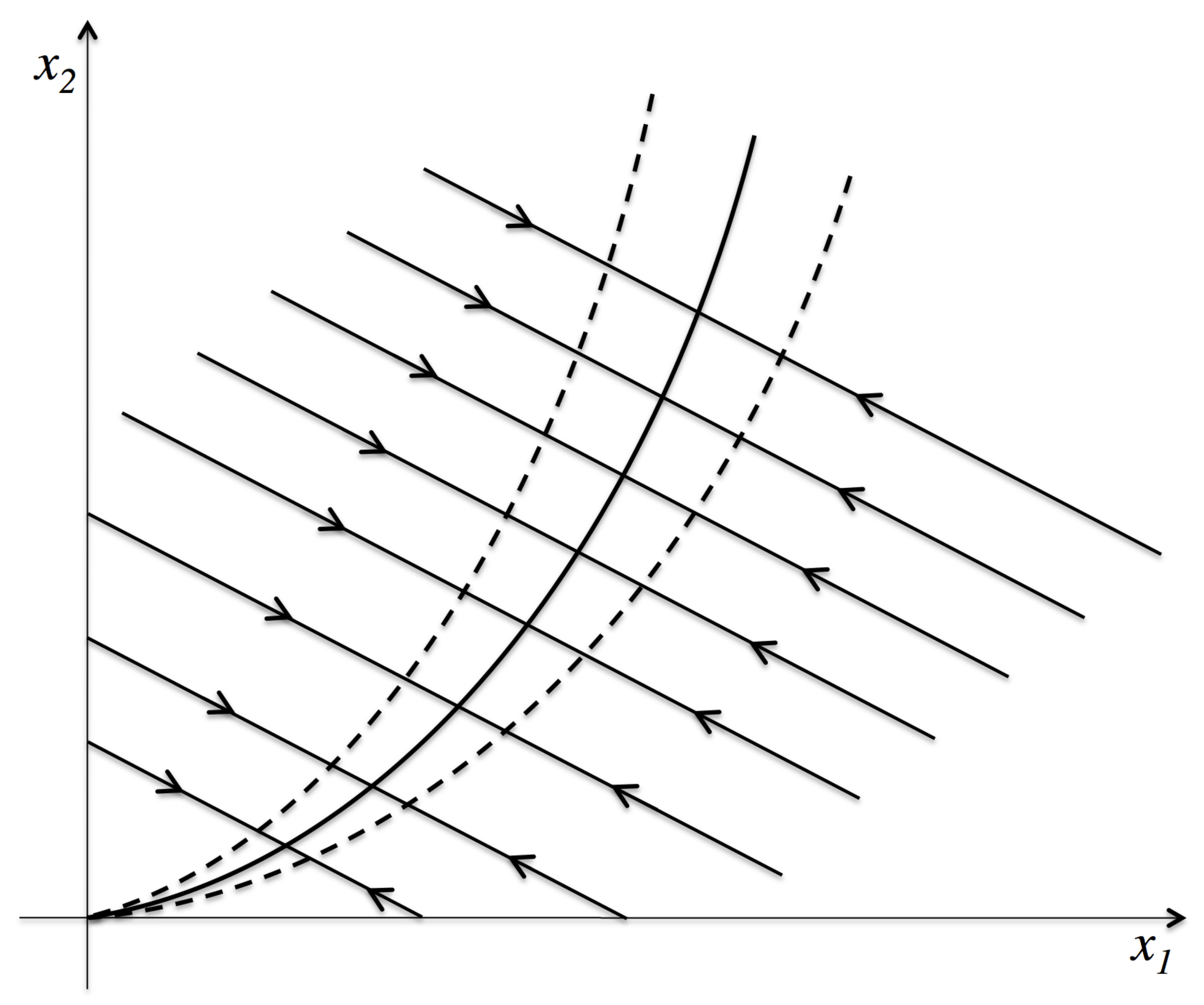} \ \ \ \ 
      $(c)$  \hskip-0.1cm \includegraphics[width=1.7in]{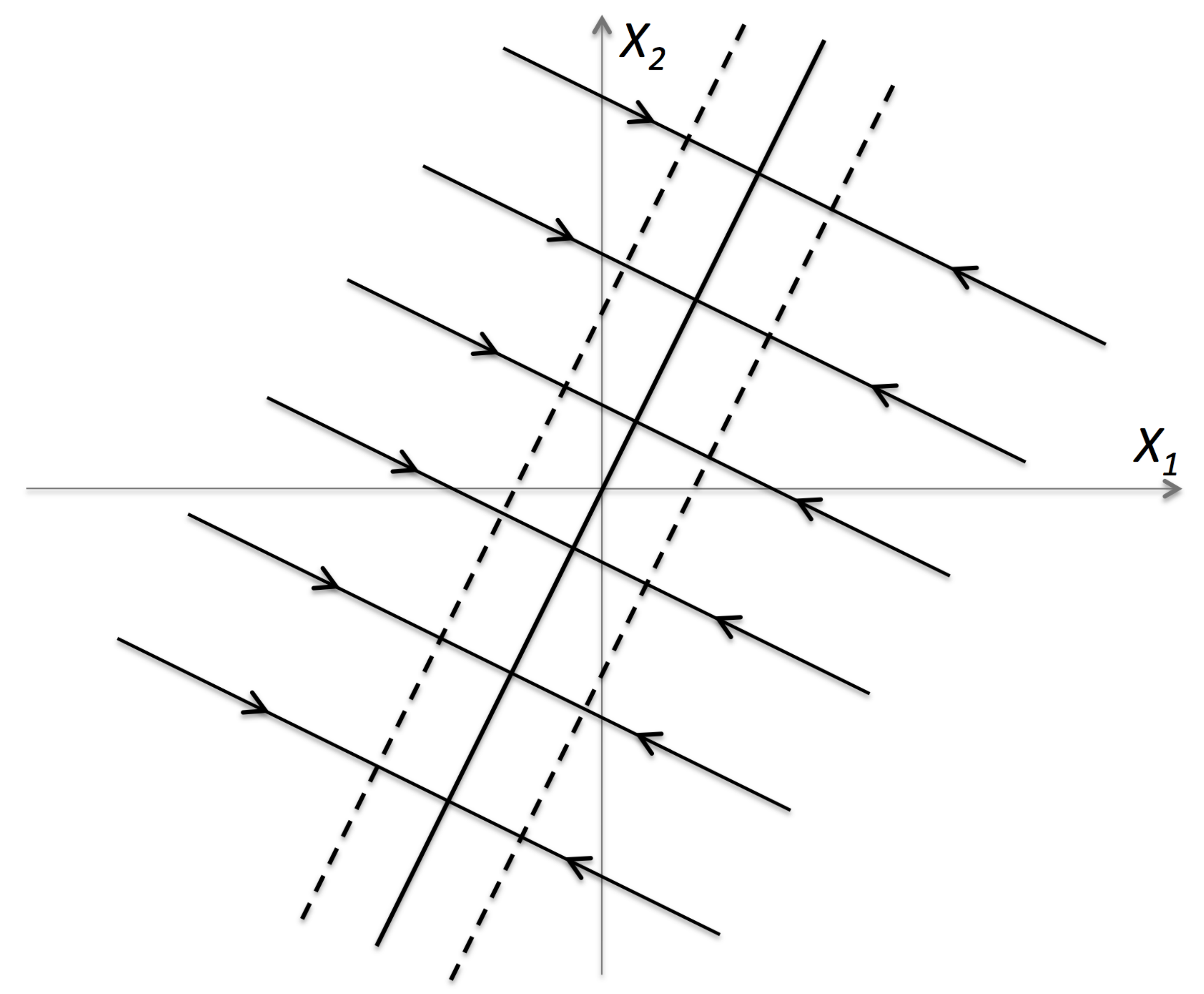}\\
      \\
    \hskip-1.5cm
      $(d)$  \hskip-0.1cm \includegraphics[width=1.7in]{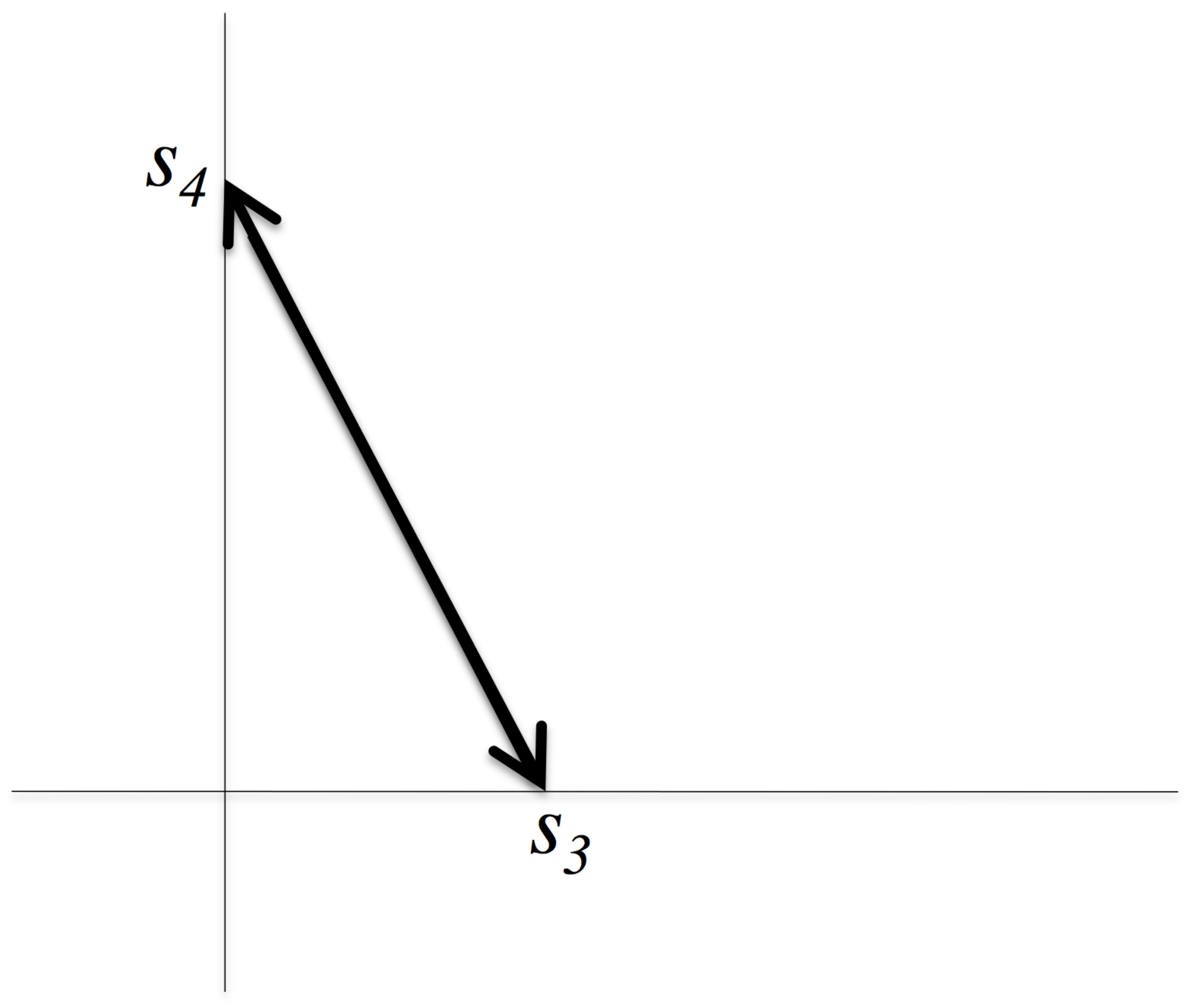} \ \ \ \ 
      $(e)$  \hskip-0.15cm \includegraphics[width=1.7in]{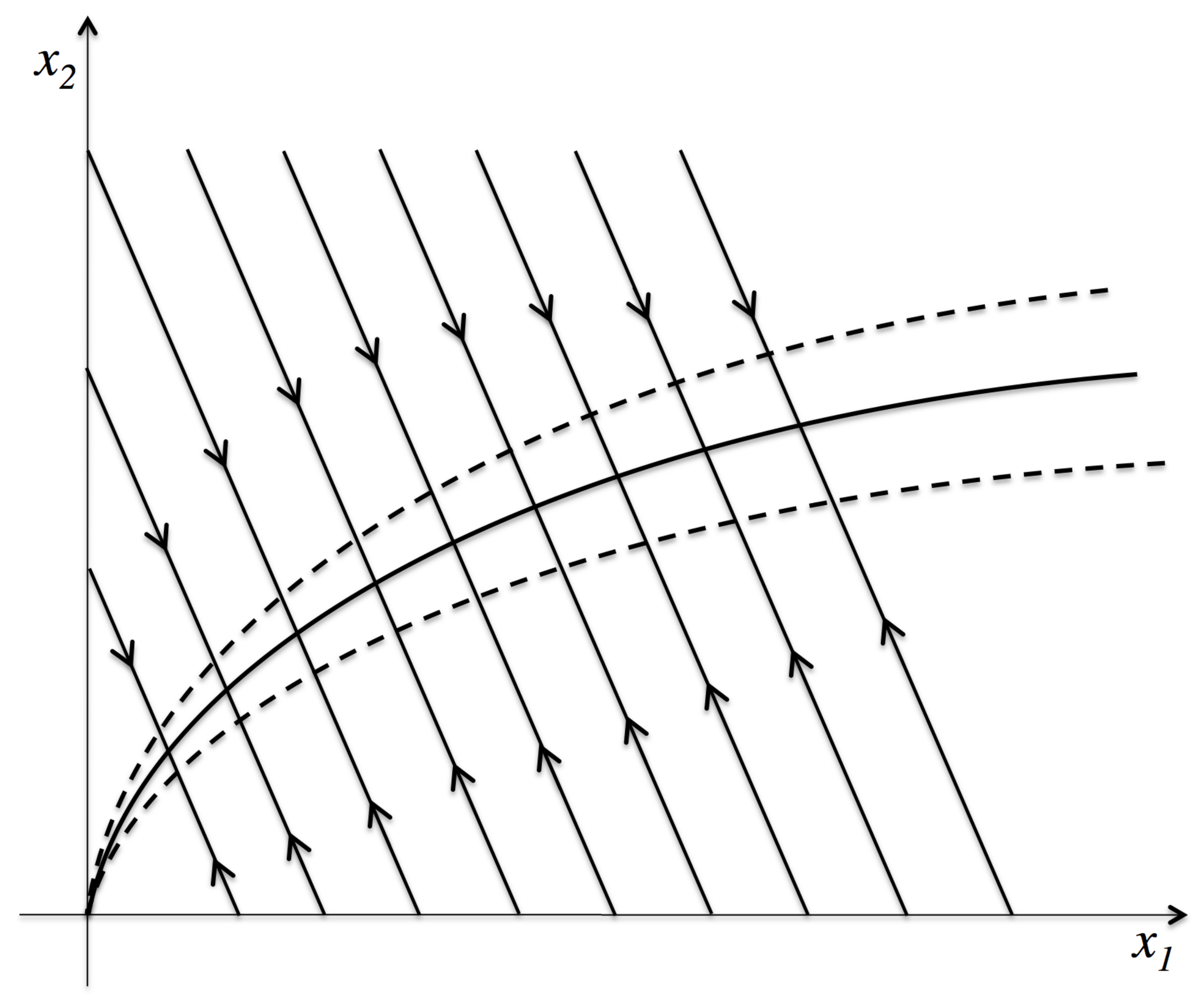} \ \ \ \ 
      $(f)$   \hskip-0.15cm \includegraphics[width=1.7in]{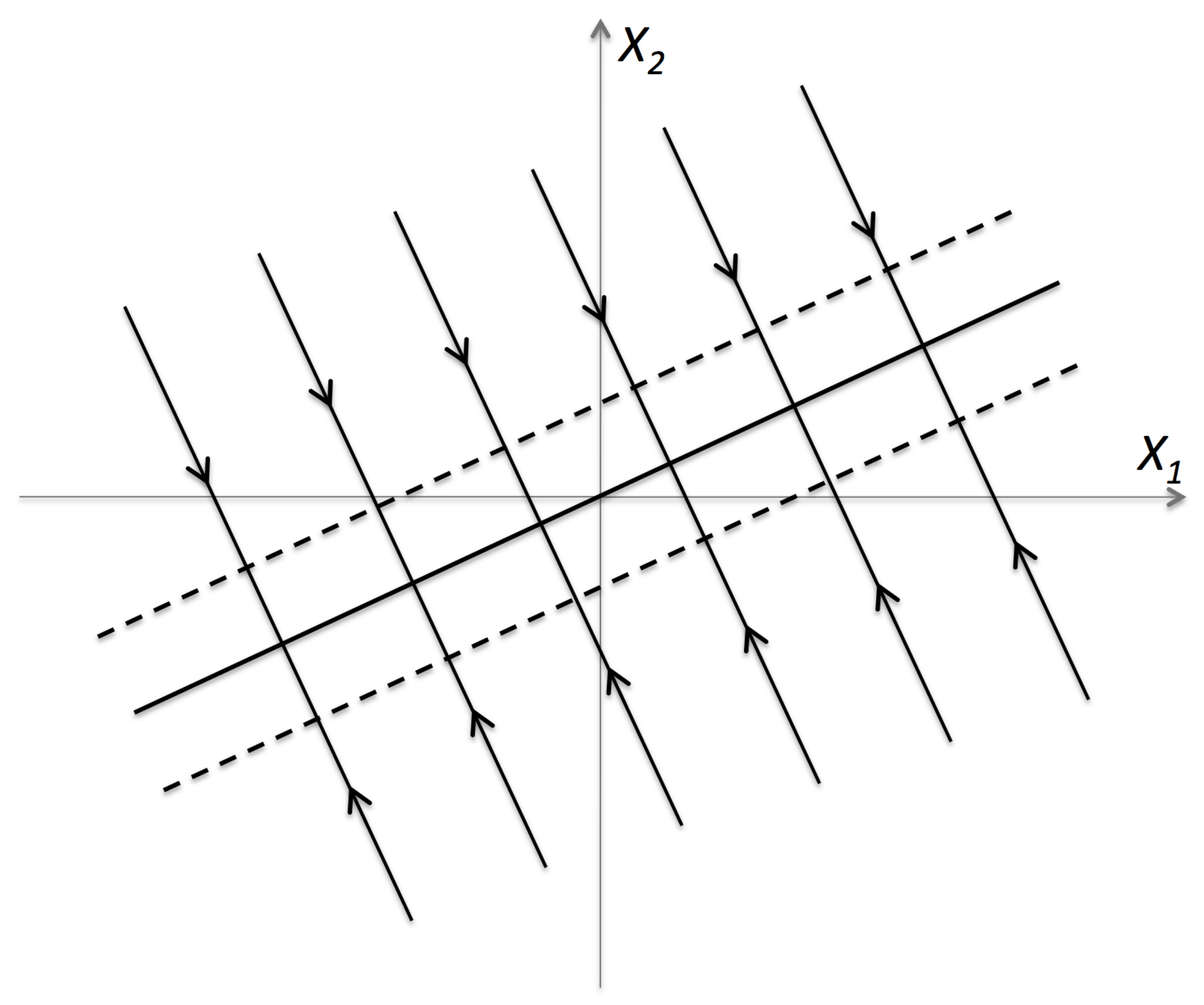}\\
      \\
    \hskip-1.5cm
      $(g)$  \hskip-0.1cm \includegraphics[width=1.7in]{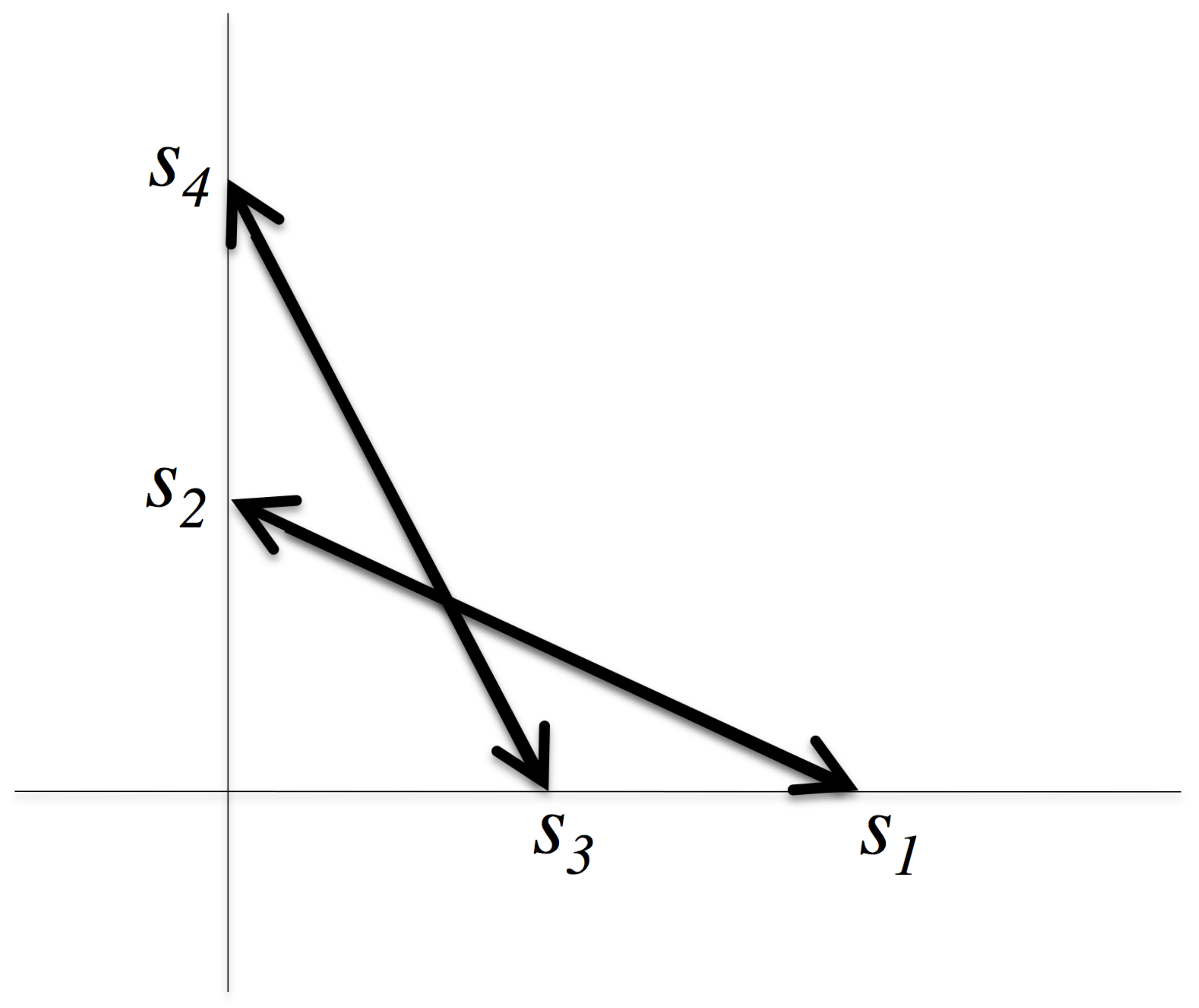} \ \ \ \ 
      $(h)$  \hskip-0.1cm \includegraphics[width=1.7in]{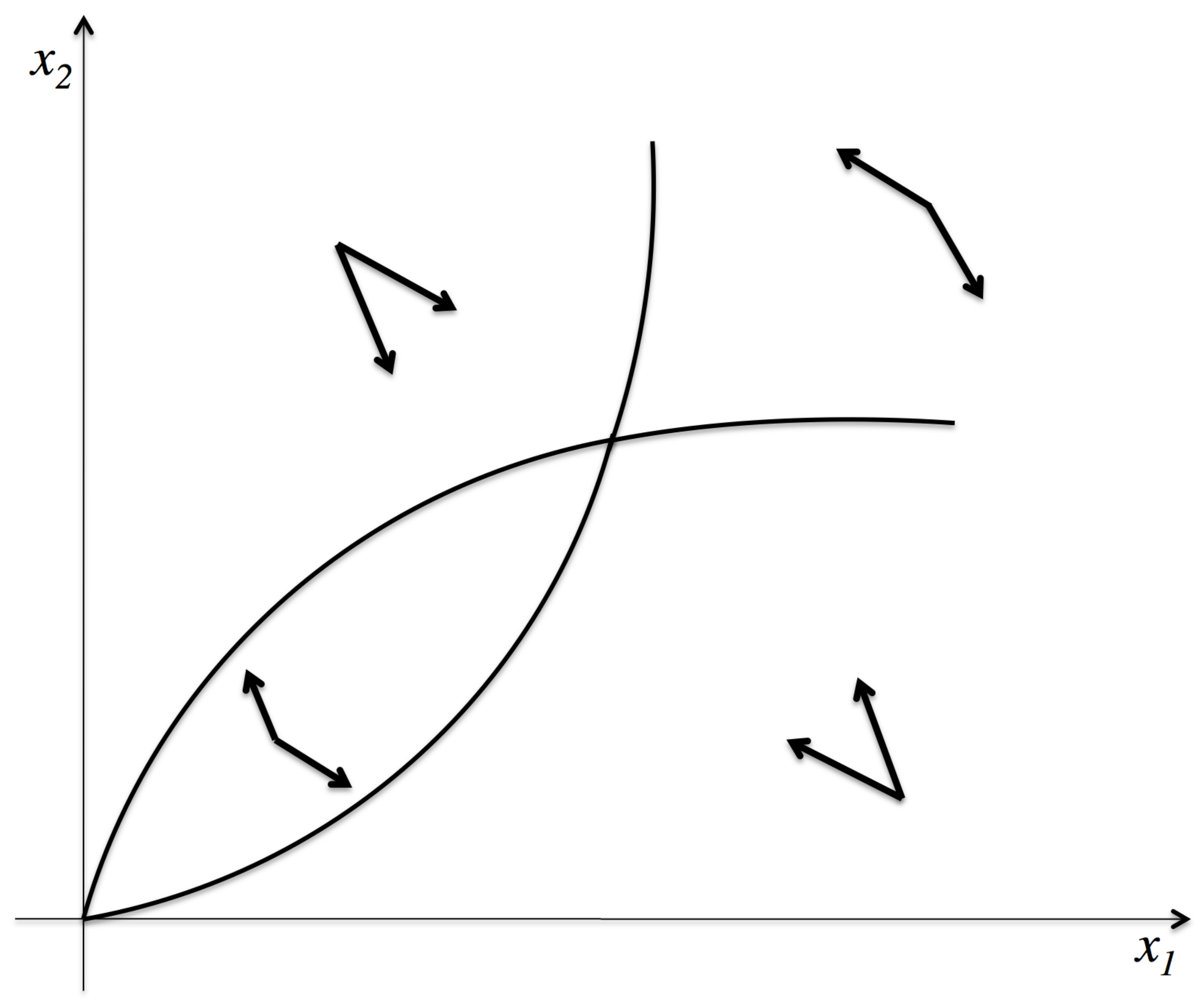} \ \ \ \ 
      $(i)$   \hskip-0.1cm \includegraphics[width=1.7in]{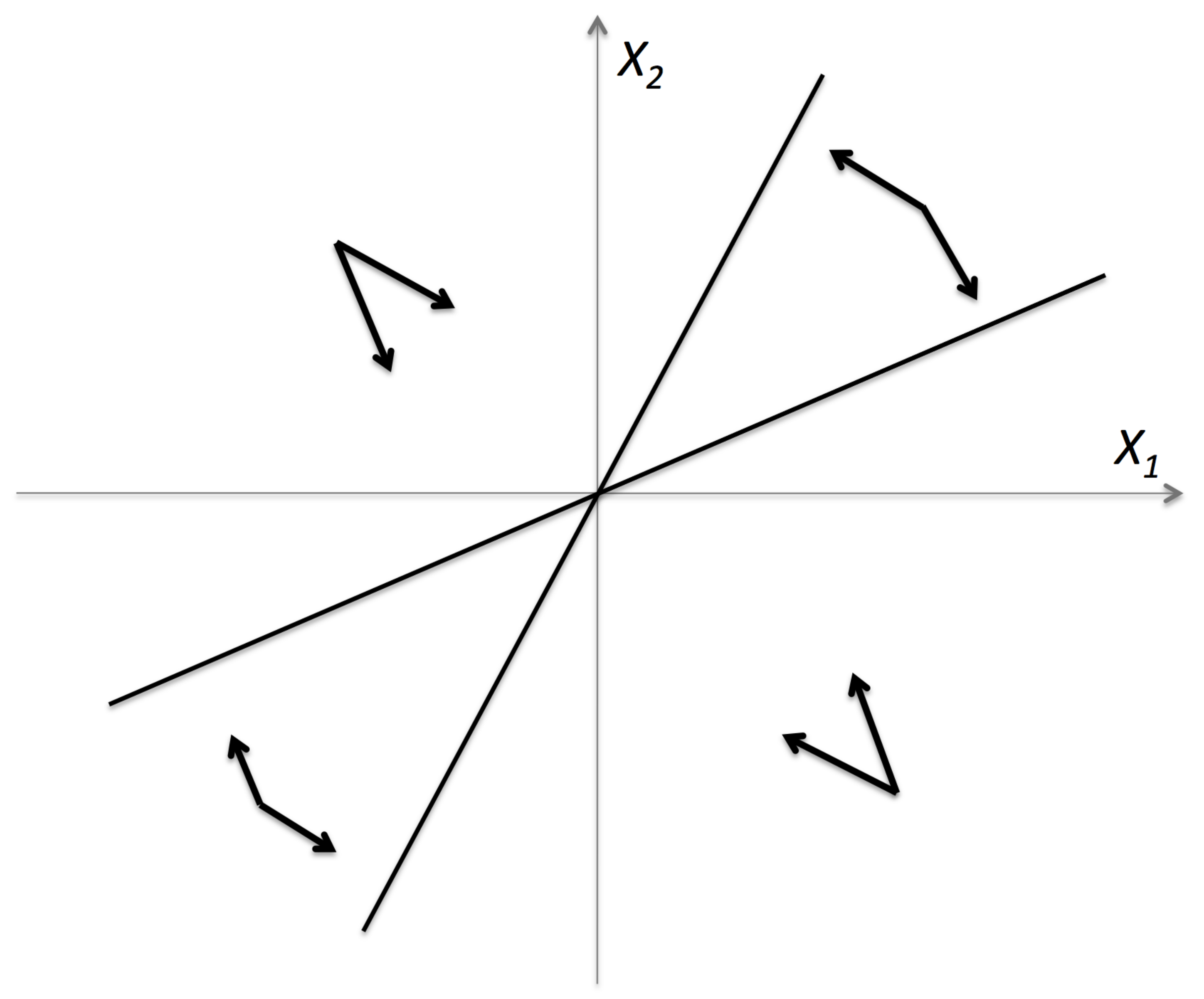}
    \end{tabular}
  \end{center}
  \caption{\label{fig:3}{\it (a) An E-graph that consists of a single reversible edge, and generates the polynomial dynamical system (\ref{example_1}). (b) The dynamics of this system in $\RR^2_{>0}$ has one-dimensional affine invariant sets, and the direction of the flow is well-defined outside an uncertainty region, which is a neighborhood of the  curve $x_1^2 = x_2$, i.e., $x^{s_1} = x^{s_2}$. (c) If we consider the diffeomorphism $X=\log x$, then the curve $x^{s_1} = x^{s_2}$ becomes the line $X\cdot (s_2-s_1) = 0$, and the uncertainty region is mapped to the set of points at distance less than some $\delta$ from this line. (d)-(f) A similar (mirror image) example. (g)-(i) Here we look at what happens if we consider an E-graph that contains {\em two} reversible edges. Note how the direction cones shown in (h) are exactly the polar cones of the cones that form the (hyperplane-generated) polyhedral fan shown in (i). \ \ See also Section 3 in~\cite{CNP} for a related example.
  }} 
\end{figure}

Therefore,  if $\G$ consists of a single reversible edge and  if $dist(\log x,H) > \delta$, then the right-hand side of the system (\ref{kvREV_1}) is a vector that is orthogonal to the hyperplane $H$ and \emph{points in the direction that goes from the point $\log x$ towards $H$} (see Fig.~\ref{fig:3}$(a)$-$(f)$ for some examples in $\RR^2$). Next, we will use this observation in order to construct a generalization of Lemma~\ref{lem_1} for all reversible E-graphs, by using the notion of \emph{polar cone}.

\bigskip

Recall that a \emph{polyhedral cone} $C\subset\RR^n$ is the set of nonnegative linear combinations of a finite set of vectors in $\RR^n$, or, equivalently, is a finite intersection of half-spaces in $\RR^n$ \cite{Rockafellar_Convex_Analysis}. For simplicity, we 
will often say \emph{cone} instead of \emph{polyhedral cone}, because the only cones we consider here are polyhedral cones.

\begin{defn}
Consider a cone $C\subset\RR^n$. The {\em polar cone} of $C$ is denoted $C^o$ and is given by
\begin{equation}\label{polar}
C^o = \{ y\in \RR^n | \ x \cdot y \le 0 \textrm{ for all } x\in C \}.
\end{equation}
\end{defn}

\noindent
The polar cone is just the negative of the better known \emph{dual cone}. Also, note that if the cone $C$ is full-dimensional (i.e., its linear span is $\RR^n$), then its polar cone $C^o$ \emph{is generated by the outer normal vectors of the codimension-1 faces of $C$}. For more information about polyhedral cones and their polar (or dual) cones see \cite{Fulton, Rockafellar_Convex_Analysis, Ziegler}. 

\bigskip

Let us now return to the  graph $\G$ that consists of a single reversible edge $s\rightleftharpoons s'$. Recall that  $H$ denotes the hyperplane through the origin and orthogonal to ${s'-s}$. Denote by $H_+$ the closed half-space of $\RR^n$ that is bounded by $H$ and contains the vector $s'-s$, and denote by $H_-$ the closed half-space of $\RR^n$ that is bounded by $H$ and contains the vector $s-s'$. Note that the polar cone $H_+^o$ is equal to the ray $l_{s'-s}^+$, and the polar cone $H_-^o$ is equal to the ray $l_{s'-s}^-$. Then the set-valued function $F_{H,\delta}$ can be rewritten as

\[
 F_{H,\delta}(X) =
  \begin{cases} 
        \hfill H_+^o                      \hfill & \text{ if \ \ $dist(X,H) > \delta$ and $X \in H_+$} \\
        \hfill H_-^o                       \hfill & \text{ if \ \  $dist(X,H) > \delta$ and $X \in H_-$} \\
        \hfill H_+^o + H_-^o    \hfill & \text{ if \ \ $dist(X,H) \le \delta$}, \\
  \end{cases}
\]
where we define $A+B = \{a+b \, | \, a\in A \textrm{ and } b\in B \}$.

\noindent
Then we can write $F_{H,\delta}(X)$ only in terms of the distance between $X$ and the half-spaces $H_+$ and $H_-$, as follows:

\[
 F_{H,\delta}(X) =
 \begin{cases} 
	\hfill H_+^o                      \hfill & \text{ if \ \ $dist(X,H_+) \le \delta$ and $dist(X,H_-) > \delta$} \\
	\hfill H_-^o                       \hfill & \text{ if \ \  $dist(X,H_-) \le \delta$ and $dist(X,H_+) > \delta$} \\
	\hfill H_+^o + H_-^o    \hfill & \text{ if \ \ $dist(X,H_+) \le \delta$ and $dist(X,H_-) \le  \delta$} \\
\end{cases}
\]

In order to be able to construct a generalization of Lemma \ref{lem_1} for the case where the E-graph $\G$ contains  {\em several} reversible edges, we want to regard the set of cones $\{ H_+, H_-, H \}$ as a special case of a {\em polyhedral fan}, because if we have several reversible edges then we have to consider a cover of $\RR^n$ given by several hyperplanes, the half-space they generate, and their intersections (see Fig. \ref{fig:3}$(g)$-$(i)$). Recall the definition of a polyhedral fan~\cite{Cox_Little_Schenck_BOOK,Fulton}:

\begin{defn}
A finite set $\F$ of polyhedral cones in $\RR^n$ is a {\em polyhedral fan} if the following two conditions are satisfied:

\medskip
$(i)$ any face of a cone in $\F$ is also in $\F$,

$(ii)$ the intersection of two cones in $\F$ is a face of both cones. 

\medskip
\noindent
We say that a polyhedral fan $\F$ is  {\em complete} if \ $\displaystyle \bigcup_{C\in\F} C = \RR^n$.
\end{defn}

\noindent
For example, given a finite set $\H$ of hyperplanes that contain the origin, consider the set $\F_\H$ of all the possible intersections of half-spaces given by hyperplanes in $\H$. Then  $\F_\H$ is a complete polyhedral fan. (Since we are only interested in complete polyhedral fans, from now on we will refer to them simply as {\em fans}.) This particular case of ``hyperplane-generated fan" will be especially relevant for motivating our  definition of toric differential inclusions.  

Indeed, we can now write $F_{H,\delta}(X)$  as
\begin{equation}\label{TDI_a}
F_{H,\delta}(X) = \sum_{\substack{C \in \{ H_+, H_-, H \} \\ dist(X,C) \le \delta}} C^o 
\end{equation}
i.e., $F_{H,\delta}(X)$ consists of all possible sums of elements of  polar cones $C^o$ such that the cone $C$ belongs to the  fan $\{H_+, H_-, H\}$, and the distance between $X$ and $C$ is at most $\delta$.
In general (see \cite{Rockafellar_Convex_Analysis}) for any cones $C_1, C_2$ we have
$$
C_1^o + C_2^o = (C_1 \cap C_2)^o
$$ 
so it follows that we can rewrite (\ref{TDI_a}) as 
\begin{equation}\label{TDI_b}
F_{H,\delta}(X) = \left(\bigcap_{\substack{C \in \{ H_+, H_-, H \} \\ dist(X,C) \le \delta}} C\right)^o 
\end{equation}
The characterizations (\ref{TDI_a}) and (\ref{TDI_b}) have the advantage that they can be easily carried over to the more general case where $\G$ consists of  {\em several} reversible edges. In that case we have several tug-of-wars going on at the same time, but for each one of them we can specify the winning direction (if any) at $x$ by calculating the distance between $X=\log x$ and some hyperplane in $\RR^n$. Depending on whether $X$ falls outside an uncertainty region or not, each reversible edge $s\rightleftharpoons s'$ of $G$ contributes one or two vectors to $F_{H,\delta}(X)$ (if one, then it is either $s'-s$ or $s-s'$, and if two, then they are $\pm(s'-s)$). 

It follows that any system (\ref{kvREV}) can be embedded into a differential inclusion on $\RR^n_+$ given by a set $\H$ of hyperplanes in $\RR^n$ and a number $\delta>0$, as follows. For each $x\in\RR^n_{> 0}$ we define $F_{\H,\delta}(\log x)$ to be the  convex cone generated by vectors orthogonal to the hyperplanes of $\H$, in the direction that goes from the point $X=\log x$ {\em towards} each hyperplane, and also the opposite direction if $X$ is at distance $<\delta$ from some hyperplane. If $X$ does not belong to any uncertainty region, then $F_{\H,\delta}(\log x)$ is defined to be exactly the {\em polar cone} $C^o$ of the (unique) cone $C\in\F_{\H}$ that contains $X$. If $X$ does belong to some uncertainty regions, then we can still describe $F_{\H,\delta}(\log x)$ in terms of polar cones, by including not just the polar of the cone of $\F_{\H}$ that contains $X$, but also the polar of each cone of $\F_{\H}$ that is at distance $\le \delta$ from $X$.

Of course, not every fan is generated by a set of hyperplanes as above. Nevertheless, we can generalize the construction described above to define a differential inclusion given by a general  fan $\F$ in $\RR^n$, as follows.
\begin{defn}\label{def_TDI}
Consider a polyhedral fan $\F$ in $\RR^n$, and a number $\delta > 0$. The {\em toric differential inclusion generated by $\F$ and $\delta$} is the differential inclusion on $\RR^n_{> 0}$ given by 
\begin{equation}\label{TDI}
\frac{dx}{dt} \in F_{\F,\delta}(\log x),
\end{equation}
where $F_{\F,\delta}$ is a set-valued function defined as
\begin{equation}\label{TDI_2}
F_{\F,\delta}(X) =  \sum_{\substack{C \in \F \\ dist(X,C) \le \delta}} C^o .
\end{equation}
\end{defn}

\noindent
In other words, the toric differential inclusion (\ref{TDI}) is a piecewise constant differential inclusion, and its right-hand side $F_{\F, \delta}(\log x)$ is the cone generated by the sum of all the polar cones $C^o$ such that $C\in\F$ and $dist(\log x,C) \le \delta$. Note that for every $x$ there is at least one such $C$, because the fan $\F$ is complete. If $C\in\F$ is a cone of dimension $< n$ and $dist(\log x,C) \le \delta$, then we say that $x$ {\em belongs to the uncertainty region of $C$}.

Like before, we can also rewrite $F_{\F,\delta}$ as 
\begin{equation}\label{TDI_3}
F_{\F,\delta}(X) =  \left( \bigcap_{\substack{C \in \F \\ dist(X,C) \le \delta}} C \right)^o .
\end{equation}

From Lemma \ref{lem_1} and the discussion above, it follows that any reversible polynomial dynamical system can be embedded into a toric differential inclusion:

\begin{prop}
\label{prop_tor_TOR}
Consider a variable-$k$  reversible polynomial dynamical system (\ref{kvREV}). Then this system can be embedded into a toric differential inclusion.
\end{prop}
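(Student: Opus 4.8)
The plan is to assemble the single-edge result of Lemma~\ref{lem_1} over all reversible edges and then recognize the resulting Minkowski sum as the right-hand side of a toric differential inclusion attached to a hyperplane-generated fan. First I would write the system in the grouped form (\ref{kvREV}), so that its right-hand side is a finite sum of edge terms $\left( k_{s\to s'}(t)x^{s} - k_{s'\to s}(t)x^{s'} \right)(s'-s)$, one for each reversible pair $s\rightleftharpoons s'$. To each such pair I associate the hyperplane through the origin orthogonal to $s'-s$, and I let $\H$ be the (finite) set of distinct hyperplanes obtained this way; the fan in the conclusion will be $\F=\F_\H$.

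Next I would make the threshold uniform. Lemma~\ref{lem_1} places the $e$-th edge term in $F_{H_e,\delta_e}(\log x)$ with $\delta_e = 2|\log\eps|/\|s'-s\|$, but these thresholds differ from edge to edge, whereas a toric differential inclusion uses a single $\delta$. Setting $\delta = \max_e \delta_e$ (a finite maximum), I would observe that $F_{H,\delta'}(X)\subseteq F_{H,\delta}(X)$ whenever $\delta'\le\delta$, since enlarging the threshold only enlarges the uncertainty region and hence the direction set. Thus each edge term still lies in $F_{H_e,\delta}(\log x)$, and summing over edges gives
\[
\frac{dx}{dt}\in \sum_{H\in\H} F_{H,\delta}(\log x),
\]
where parallel edges contributing the same hyperplane cause no trouble because $C^o+C^o=C^o$ for a cone.

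It then remains to identify this Minkowski sum with the toric differential inclusion of $\F_\H$. Using the intersection form (\ref{TDI_b}) of each single-hyperplane function I would write $F_{H,\delta}(X)=\bigl(D^H_\delta(X)\bigr)^o$, where $D^H_\delta(X)\in\{H_+,H_-,H\}$ is the intersection of those cones of $\{H_+,H_-,H\}$ within distance $\delta$ of $X$; and via (\ref{TDI_3}) I would write the candidate $F_{\F_\H,\delta}(X)=D_\delta(X)^o$ with $D_\delta(X)=\bigcap_{C\in\F_\H,\ dist(X,C)\le\delta} C$. Since $\sum_i C_i^o=\bigl(\bigcap_i C_i\bigr)^o$, the Minkowski sum becomes $\bigl(\bigcap_{H\in\H} D^H_\delta(X)\bigr)^o$, so by the inclusion-reversing property of polarity the desired embedding $\sum_H F_{H,\delta}(X)\subseteq F_{\F_\H,\delta}(X)$ reduces to the purely geometric inclusion
\[
D_\delta(X)\subseteq \bigcap_{H\in\H} D^H_\delta(X).
\]

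The heart of the argument — and the step I expect to be the main obstacle — is this last inclusion, i.e.\ checking that the fan-wide intersection of nearby cones lies inside each per-hyperplane selection $D^H_\delta(X)$. I would prove it hyperplane by hyperplane, exhibiting for each $H\in\H$ one cone $C\in\F_\H$ with $dist(X,C)\le\delta$ and $C\subseteq D^H_\delta(X)$, which forces $D_\delta(X)\subseteq C\subseteq D^H_\delta(X)$. When $dist(X,H)>\delta$, so that $D^H_\delta(X)$ is the closed half-space on $X$'s side of $H$, the cone of $\F_\H$ whose relative interior contains $X$ works, since $H\in\H$ forces every cell to have constant sign with respect to $H$, placing that whole cone on $X$'s side. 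When $dist(X,H)\le\delta$, so that $D^H_\delta(X)=H$, I would instead take the cell whose relative interior contains the orthogonal projection $X'$ of $X$ onto $H$: because $X'\cdot n_H=0$ for a normal vector $n_H$ of $H$, the sign vector of that cell has a zero in the $H$-coordinate, so the cell lies in $H$, while $dist(X,C)\le\|X-X'\|=dist(X,H)\le\delta$. The care needed here is entirely combinatorial — the cell/sign-vector description of $\F_\H$ and the behavior of distances to its faces; with that in hand the embedding follows (and in fact equality holds, though only the stated inclusion is used).
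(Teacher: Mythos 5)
Your proof is correct and follows essentially the same route as the paper: apply Lemma~\ref{lem_1} edge by edge, pass to the hyperplane-generated fan $\F_\H$ with the uniform threshold $\delta = \max_e \delta_e$, and recognize the resulting Minkowski sum of polar cones as (a subset of) $F_{\F_\H,\delta}$ via the identity $C_1^o + C_2^o = (C_1 \cap C_2)^o$. The only difference is one of rigor: the paper justifies the proposition by the informal discussion preceding it (each edge "contributes one or two vectors" to the hyperplane-generated inclusion), whereas you make the key geometric step explicit, namely the inclusion $D_\delta(X) \subseteq \bigcap_{H \in \H} D^H_\delta(X)$, verified through the sign-vector description of the cells of $\F_\H$.
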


Given a reversible polynomial dynamical system generated by the E-graph $\G$, the most natural such embedding is obtained if we choose $\F$ to be the fan generated by the set $\H$ of hyperplanes that are orthogonal to the edge vectors of $\G$, i.e.,
$$
\H = \{ (s'-s)^\perp | \ s \rightleftharpoons s'  \in \G \},
$$
and we choose $\delta$ as suggested by Lemma~\ref{lem_1}, i.e., $\displaystyle \delta = \max_{s \rightleftharpoons s'  \in \G}\frac{2 | \log\eps |}{|| s - s' ||}$.

\begin{figure}[H]
  \begin{center}
    \begin{tabular}{c}
      $(a)$ \includegraphics[width=2.3in]{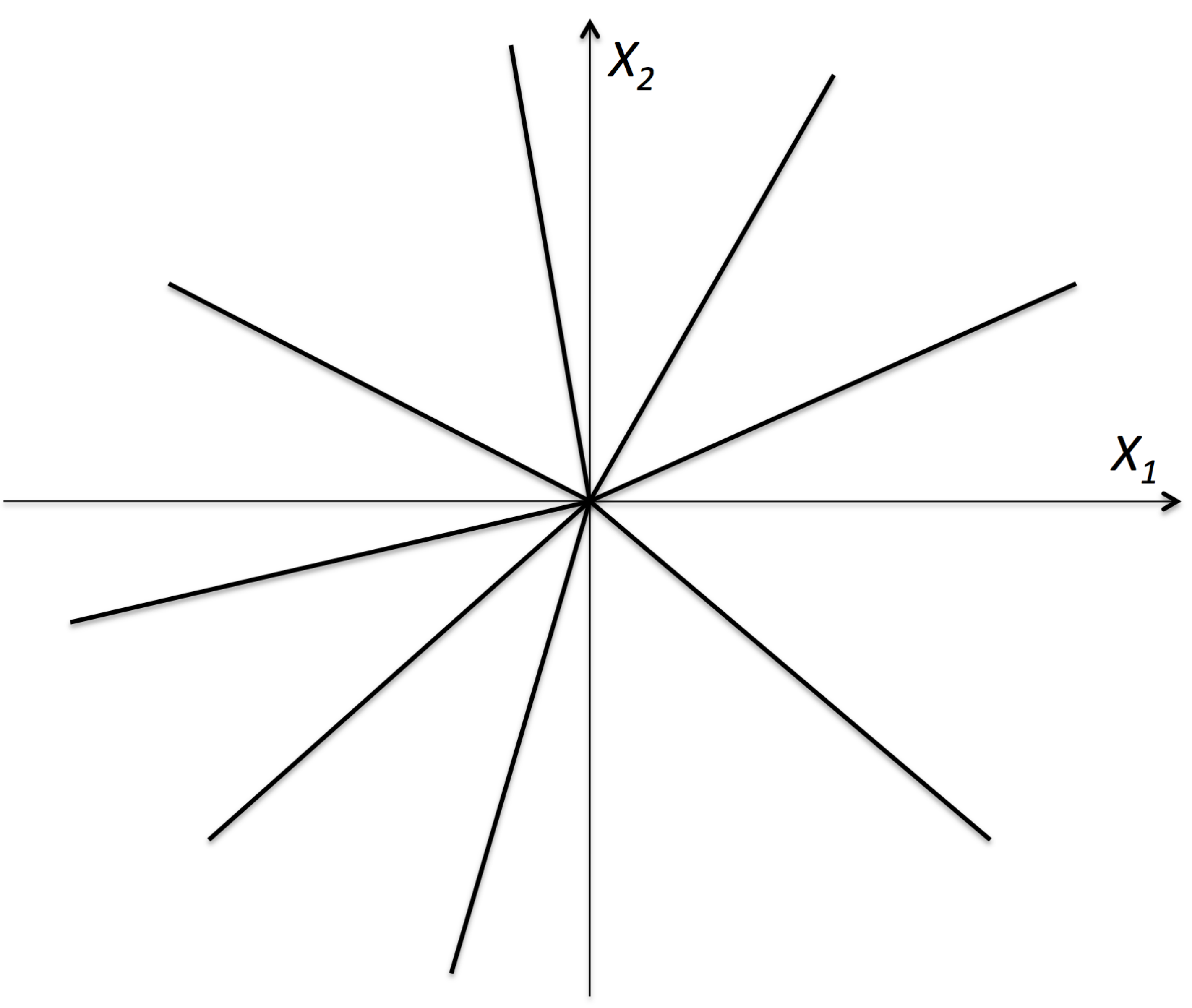} \ \ \ \ 
      $(b)$ \includegraphics[width=2.3in]{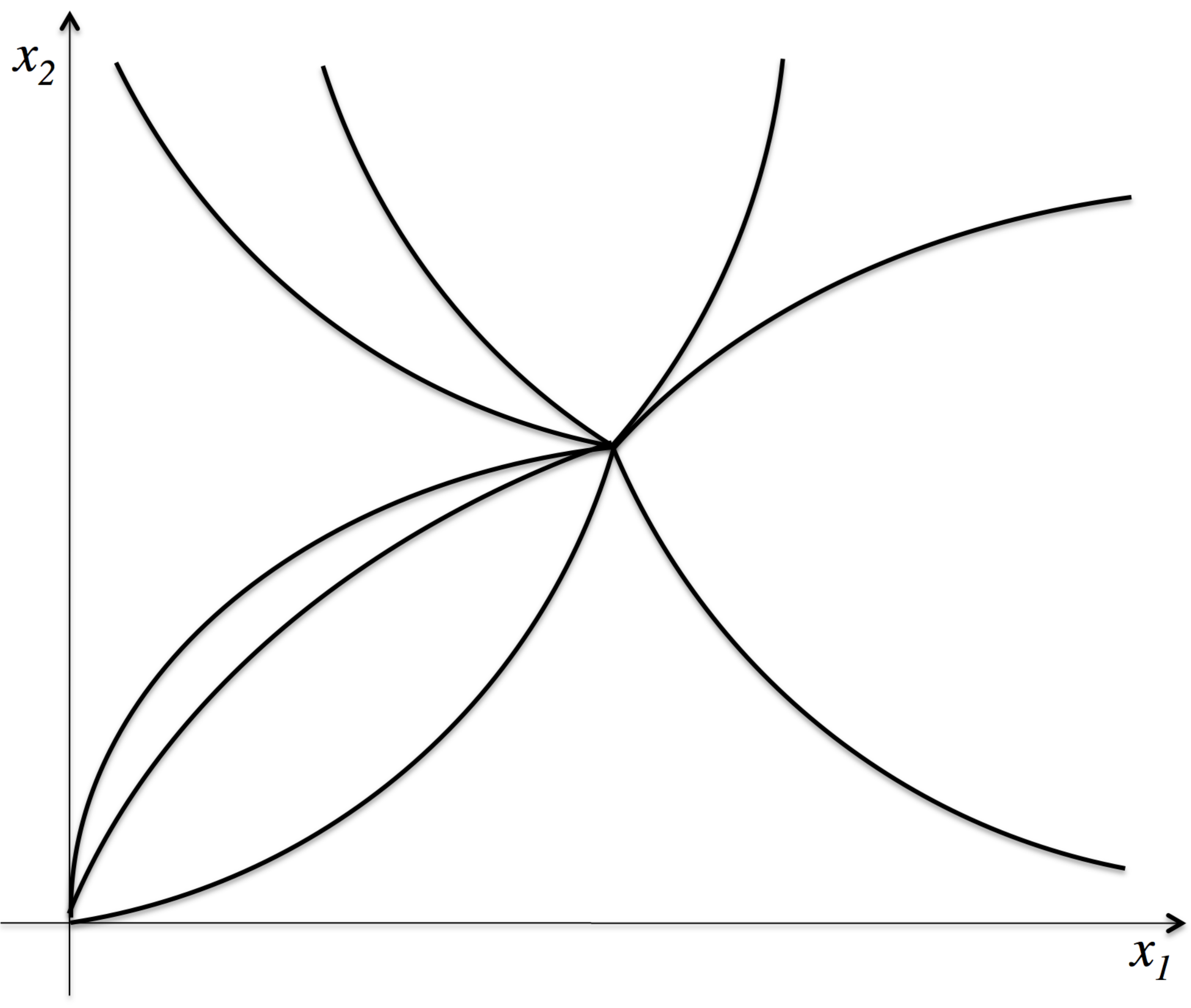}\\
      $(c)$ \includegraphics[width=2.3in]{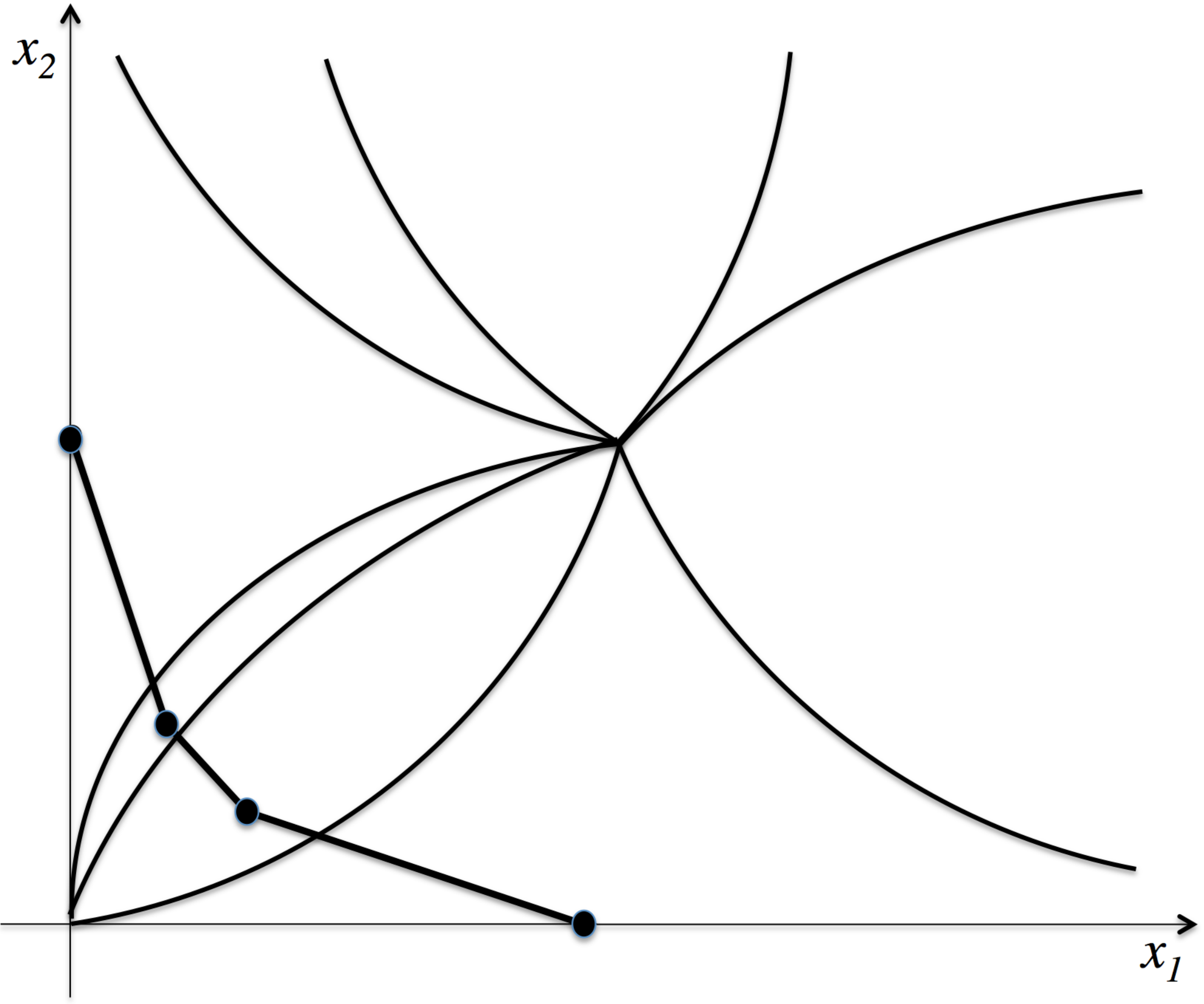} \ \ \ \ 
      $(d)$ \includegraphics[width=2.3in]{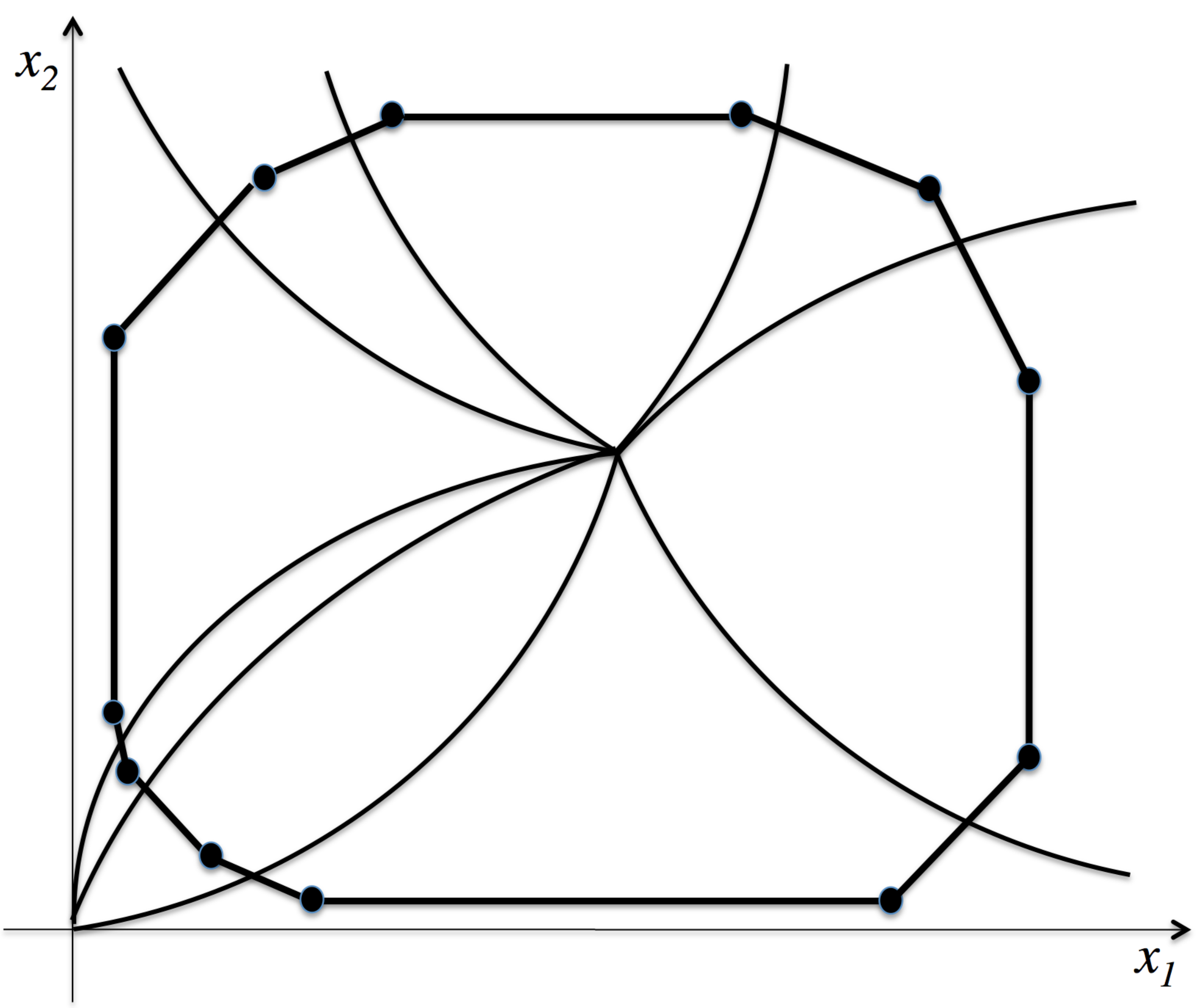}
    \end{tabular}
  \end{center}
  \caption{\label{fig:2}{\it (a) A general polyhedral fan in $\RR^2$ gives rise to a toric differential inclusion in $\RR^2_{> 0}$, whose piecewise constant domains are  sketched in~(b). We are not showing explicitly the direction cones in~(b), but they are just the polar cones of the cones in (a). Some neighborhoods of the curves shown in~(b) delimit the {\em uncertainty regions} of this toric differential inclusion. To visualize these neighborhoods in (b),  we should imagine that each curve in (b) has some nonzero thickness that represents its uncertainty region, and the right-hand side of the toric differential inclusion within that uncertainty region is a half-plane (each such half-plane is the polar cone of a ray in (a)). In (c) we see how we can use the slopes of the boundary lines of these half-planes to build a polygonal line that crosses each curve along line segments of specified slope; when such a line segment crosses a curve in (c), it must be orthogonal to the corresponding ray in (a). In (d) we see that we can follow the imposed slopes to build {\em compact invariant regions}, which allow us to prove that a polynomial dynamical system embedded in this toric differential inclusion is persistent and actually also permanent~\cite{CNP}. 
  }} 
\end{figure}

On the other hand, we are most interested in {\em weakly} reversible polynomial dynamical systems, since the conjectures we described in Section 2 refer to this  larger class of dynamical systems. In the next section we address this problem, and we prove that variable-$k$  weakly reversible polynomial dynamical systems can be embedded into toric differential inclusions. This will imply that toric dynamical systems~\cite{TDS} can be embedded into toric differential inclusions, and is some of the motivation for calling these differential inclusions ``toric".

\section{Embedding of variable-$k$ weakly reversible polynomial dynamical systems into toric differential inclusions}

As we discussed in the previous section, the simplest examples of toric differential inclusions are generated by polyhedral fans $\F_\H$ that are determined by a finite set $\H$ of hyperplanes. We will refer to this class of toric differential inclusions as {\em hyperplane-generated toric differential inclusions}. 

We have also seen in the previous section that any variable-$k$  {\em reversible} polynomial dynamical system in $\RR^n_{> 0}$ can be embedded into a (hyperplane-generated) toric differential inclusion. 
Here we show that the same is true for all variable-$k$ {\em weakly reversible} polynomial dynamical systems.

\begin{thm}
\label{thm_tor_TOR}
Consider a variable-$k$ weakly reversible polynomial dynamical system (\ref{polynomial_G_nonaut}). Then this system can be embedded into a toric differential inclusion.
\end{thm}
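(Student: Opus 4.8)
The plan is to imitate the construction behind Proposition~\ref{prop_tor_TOR}, but with a fan fine enough that, away from the uncertainty regions, the relative sizes of all the monomials $x^{v}$ ($v\in V$) are unambiguous. Let $\G=(V,E)$ generate the system (\ref{polynomial_G_nonaut}). I would take $\F$ to be the complete fan cut out by the hyperplanes orthogonal to all differences of vertices,
\[
\H=\{\,(v-w)^{\perp}\ \mid\ v,w\in V,\ v\neq w\,\},
\]
and fix $\delta$ later in terms of $\eps$ and the geometry of $V$. The maximal cones of this $\F$ are exactly the regions on which the values $X\cdot v$, $v\in V$, have a fixed strict total order, so a point $X=\log x$ that lies in no uncertainty region sits deep inside one maximal cone $C$, where the monomials are totally ordered, $x^{v_{(1)}}\gg x^{v_{(2)}}\gg\cdots$, with gaps controlled by $\delta$.

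For such an $X$ the formula (\ref{TDI_3}) gives $F_{\F,\delta}(\log x)=C^{o}$, so it suffices to show that the right-hand side of (\ref{polynomial_G_nonaut}) lies in $C^{o}$; points inside one or more uncertainty regions only enlarge the target cone and so are easier. Grouping the terms of (\ref{polynomial_G_nonaut}) by source vertex gives $\frac{dx}{dt}=\sum_{v\in V} x^{v}\,g_{v}(t)$, where $g_{v}(t)=\sum_{v\to w\in E}k_{v\to w}(t)\,(w-v)$ is a positive combination of the outgoing edge vectors at $v$. Since $C$ is full-dimensional and pointed, membership in $C^{o}$ can be checked facet by facet: the outer facet normals of $C^{o}$ are the extreme generators $\phi$ of $C=(C^{o})^{o}$, so I must verify $\phi\cdot\frac{dx}{dt}\le 0$ for each such $\phi$. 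The key point is that $\phi\in C$, so $\phi$ respects the \emph{same} total order on $V$ as $X$ does; thus an edge $v\to w$ contributes a nonpositive term to $\phi\cdot\frac{dx}{dt}$ whenever it is ``downhill'' for $\phi$, and a positive, outward-pushing term only when $\phi\cdot(w-v)>0$, i.e. when it is ``uphill''.

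The main obstacle is to control these uphill terms, and this is exactly where weak reversibility must be used. A weakly reversible E-graph is endotactic: any edge $v\to w$ with $\phi\cdot(w-v)>0$ lies on an oriented cycle, along which the net change of $\phi\cdot(\cdot)$ is zero, so the cycle must also traverse a downhill edge whose source $u$ has $\phi$-height at least that of $w$, hence strictly above $v$. Because $\phi$ and $X$ induce the same order on $V$, this compensating source also carries the strictly larger monomial weight $x^{u}\gg x^{v}$, with a gap that the $\delta$-separation makes exponentially large; the inward contribution of that downhill edge therefore dominates the offending uphill contribution. Pairing every uphill edge with such a compensating downhill edge and summing shows $\phi\cdot\frac{dx}{dt}\le 0$, provided $\delta$ is taken large enough to absorb the bounded ratios $k_{e}\in[\eps,1/\eps]$ and the finitely many geometric constants---exactly the role played by the threshold $\delta=2|\log\eps|/\|s'-s\|$ in Lemma~\ref{lem_1}. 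Finally, since weak reversibility is a property of each connected component of $\G$ and the estimate above localizes, at any given $X$, to the component carrying the dominant monomials, the same argument applies verbatim and delivers the embedding into the toric differential inclusion of Definition~\ref{def_TDI}.
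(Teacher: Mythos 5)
Your main-case argument takes a genuinely different route from the paper's, and for points outside all uncertainty regions it is essentially correct. The paper decomposes the weakly reversible graph into oriented cycles, and for a single cycle relabels the vertices $v_1,\dots,v_r$ by decreasing $X\cdot v$, telescopes each edge vector into consecutive differences $v_{l+1}-v_l$, and uses the fact that every ``cut'' of a cycle is crossed by equally many forward and backward edges to get each coefficient $\Phi_l>0$ already with the modest threshold $\delta_0$ of (\ref{delta_0}); the general weakly reversible case is then handled by decomposing the right-hand side over cycles (with smaller $\eps_i$) and taking a union of hyperplane sets. You instead keep the whole graph, test membership in $C^o$ against each generator $\phi$ of $C$, and compensate each $\phi$-uphill edge by a strictly $\phi$-downhill edge on a cycle through it, whose source carries an exponentially larger monomial. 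This avoids the cycle decomposition entirely, at the price of a $\delta$ that depends on $|E|$ and on geometric constants of the fan, not just on $\eps$ and $\min_{i\neq j}\|s_i-s_j\|$ --- which is fine, since any fixed $\delta$ still defines a toric differential inclusion. (Two harmless imprecisions: a generator $\phi$ lies on the boundary of $C$, so it induces only a \emph{weak} order compatible with that of $X$, with ties contributing nothing; and the closing remark about ``the component carrying the dominant monomials'' is unnecessary, since the compensation is per cycle and the sum over all edges of all components is then nonpositive.)

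The genuine gap is the clause asserting that points inside uncertainty regions ``only enlarge the target cone and so are easier.'' Enlarging the target does not let you inherit anything, because the statement you proved in the main case --- that the right-hand side lies in $C^o$ for the maximal cone $C$ containing $X$ --- is simply \emph{false} inside uncertainty regions. Already for a single reversible edge $s\rightleftharpoons s'$ (Lemma \ref{lem_1}): if $\dist(X,H)\le\delta$ and $X\cdot(s'-s)>0$, the maximal cone containing $X$ is $H_+$, whose polar is the ray spanned by $s-s'$; yet in that strip one can have $k_{s\to s'}(t)x^{s}-k_{s'\to s}(t)x^{s'}>0$ (e.g.\ $k_{s\to s'}=1/\eps$, $k_{s'\to s}=\eps$, $1<x^{s'-s}<\eps^{-2}$), so the right-hand side is a positive multiple of $s'-s\notin H_+^o$. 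So the uncertainty case needs its own argument, and it is exactly the case where your domination estimates degenerate, since the relevant monomial gaps are no longer bounded below by $e^{\delta\|v-w\|}$. The fix --- essentially the paper's ``first type / second type'' analysis --- stays within your framework: run the facet test with $\phi$ ranging over generators of $\bigl(F_{\F,\delta}(X)\bigr)^{o}=\bigcap_{\dist(X,C)\le\delta}C$ rather than of the single maximal cone. For such $\phi$, any pair $(v,w)$ whose hyperplane $(v-w)^\perp$ is $\delta$-close to $X$ is forced to be $\phi$-level (the cone of the fan containing the projection of $X$ onto that hyperplane lies within $\delta$ of $X$ and inside the hyperplane, so $\phi$ belongs to it); hence every $\phi$-uphill edge, and every comparison used in your compensation, involves only pairs whose hyperplanes are at distance $>\delta$ from $X$, and your estimate then goes through verbatim. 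Without this added step the proof does not cover the regions for which toric differential inclusions were introduced in the first place.
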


\begin{proof}
Denote by $\G$ a weakly reversible E-graph that generates our system. Consider first the case where $\G$ consists of a single oriented cycle. 
Then the graph $\G$ is given by $s_1 \to s_2 \to ... \to s_r \to s_1$, and the variable-$k$ weakly reversible dynamical system it generates has the form 
\begin{equation}\label{MAcycle}
\frac{dx}{dt} = \sum_{i=1}^r k_i(t) \, x^{s_i} (s_{i+1} - s_i), 
\end{equation}
where $s_{r+1} = s_1$ and $\eps \le k_i(t) \le \frac{1}{\eps}$ for some ${\eps}>0$.

Consider the set $\L$ of lines through the origin in the direction of vectors $s_i - s_j$ for all $i \neq j$, and denote by $\H$ the set of all hyperplanes that are orthogonal to a line in $\L$, i.e.,
$$
\H = \{ \ell^\perp \, | \, \ell \in \L \}.
$$
Denote by $\F_\H$ the polyhedral fan generated by the set of hyperplanes $\H$, and, for $\delta>0$, denote by $\T_{\H,\delta}$ the corresponding hyperplane-generated toric differential inclusion. We will show that there exists $\delta_0>0$ such that the single-cycle variable-$k$ weakly reversible dynamical system~(\ref{MAcycle}) is embedded in the toric differential inclusion $\T_{\H,\delta_0}$. (Note that when refer below to reversible edges $s_i\rightleftharpoons s_j$, we do {\em not} assume that these edges belong to $\G$; we only mention them as a tool in our construction of $\T_{\H,\delta_0}$.)

Choose $\delta_0>0$ large enough such that the uncertainty regions given by the {\em reversible} edges $s_i\rightleftharpoons s_j$ and $\eps$ are contained within the uncertainty regions of the toric differential inclusion $\T_{\H,\delta_0}$. For example, according to Lemma~\ref{lem_1} (see also Proposition~\ref{prop_tor_TOR}), we can choose 
\begin{equation}\label{delta_0}
\delta_0 = \max_{i \neq j}\frac{2 | \log\eps |}{|| s_i - s_j ||}.
\end{equation}

\bigskip

Let us first consider the case of a point $x\in\RR_{> 0}^n$ that does {\em not} belong to any uncertainty region of $\T_{\H,\delta_0}$. Then the point $X = \log x$ is not contained in any hyperplane in $\H$, so there must exist a cone $C$ in $\F_\H$ such that $C$ has dimension $n$ and contains the point $X$ in its interior.  
%
%
%
%
We will  show that the right-hand side of~(\ref{MAcycle}) is contained in the polar cone $C^o$. 


Consider a vector $w$ in the interior of $C$, and consider the orthogonal projections of the vectors  $s_1, s_2 ,..., s_r$ on the line $\ell_w$ that passes through the origin in the direction given by $w$. Then no two such projections are the same, because $w$ does not belong to any of the hyperplanes in $\H$. In other words, we have that $s_i \cdot w \neq s_j \cdot w$ whenever $s_i  \neq s_j$.
We now use these projections to give {\em a second set of names} to the vectors $s_1, s_2 ,..., s_r$, say $v_1, v_2 ,..., v_r$, to record the order in which these projections appear along the line $\ell_w$. More precisely, we choose the names $v_1, v_2 ,..., v_r$ in the 
order (from largest to smallest) of the values of $v_l \cdot w$,  so for example,  $v_1$ equals the $s_i$ that has the largest value of $s_i \cdot w$, $v_2$ equals the $s_i$ that has the second-largest value of $s_i \cdot w$, and so on. 
Note also that, since the signs of the dot products 
$
(v_{l+1}-v_l) \cdot w
$ 
cannot change as $w$ is allowed to vary in the interior of $C$, it follows that 
the new names $v_1, v_2 ,..., v_r$ do {\em not} depend on the particular choice of vector $w$ in the interior of $C$. In other words, the dot products 
$
(v_{l+1}-v_l) \cdot w
$
are all {\em negative} numbers, for all $w$ in the interior of $C$.
Therefore, the vectors $v_2-v_1, v_3-v_2, ..., v_r-v_{r-1}$ belong to $C^o$. 

So, in order to show that the right-hand side of (\ref{MAcycle}) is included in $C^o$, it is enough to show that it can be written as a positive linear combination of the vectors $v_2-v_1, v_3-v_2, ..., v_r-v_{r-1}$. 

If $s_{i_1} \!= v_1, \ s_{i_2} \!= v_2,\  ...,\ s_{i_r} \!= v_r$, then note that $(i_1,i_2,...,i_r)$ is a permutation of $(1,2,...,r)$. If we denote the inverse permutation by $(j_1,j_2,...,j_r)$, it follows that $s_1 = v_{j_1}$, $s_2 = v_{j_2}$, and so on.

Then we have $s_2 - s_1 = v_{j_2} - v_{j_1}$. If $j_2 > j_1$ we write
$$
s_2 - s_1 = \sum_{l=j_1}^{j_2-1} (v_{l+1} - v_l),
$$
and if $j_2 < j_1$ we write 
$$
s_2 - s_1 = -\sum_{l=j_2}^{j_1-1} (v_{l+1} - v_l),
$$
We do the same for $s_3 - s_2, \ s_4 - s_3$, and so on. 
This way, we write each difference $s_{i+1} - s_i$ from the right-hand side of~(\ref{MAcycle}) in terms of the vectors $\pm(v_{l+1} - v_l)$, with $l=1,2, ...,r-1$. 
Therefore we can re-group terms to obtain
\begin{equation}\label{MAcycle_new}
\frac{dx}{dt} = \sum_{l=1}^{r-1} \Phi_l (v_{l+1} - v_l), 
\end{equation}
where $\Phi_l$ is a sum of several terms of the form $k_i \, x^{v_i}$, with various signs. 

Note now that the positive terms inside $\Phi_l$ correspond to edges of the form $v_{m} \to v_{n}$ with $m \le l < n$, and negative terms inside $\Phi_l$ correspond to edges of the form $v_{m} \to v_{n}$ with $n \le l < m$. This means that the positive terms inside $\Phi_l$ are a sum of terms of the form $k_i(t) \, x^{v_i}$ with $i \le l$, and the negative terms inside $\Phi_l$ are a sum of terms of the form $k_i(t) \, x^{v_i}$ with $i > l$. 
In particular, since $\G$ is a cycle, it follows that $\Phi_l$ contains at least one positive term, and at least one negative term.
Recall that $X = \log x$ is in the interior of $C$. Then the dot products of the form  $(v_{l+1} - v_l) \cdot X$ must be negative for all $l$, which implies that 
 $x^{v_l} >x^{v_{l+1}}$ for all $l$. Moreover, since $x$ does not belong to any uncertainty region, and due to our choice of $\delta_0$~(see (\ref{delta_0}) and Lemma~\ref{lem_1}), this inequality remains the same even if we include the terms $k_l(t)$ and $k_{l+1}(t)$, and we obtain $k_l(t)x^{v_l} >k_{l+1}(t)x^{v_{l+1}}$. Therefore, we have
$$
k_1(t) x^{v_1} > k_2(t) x^{v_2} > ... > k_r(t) x^{v_r}.
$$

Also, note that the number of positive terms inside $\Phi_l$ is the same as the number of negative terms inside $\Phi_l$, because the graph $\G$ is a cycle. 
Therefore, the sum of the positive terms inside $\Phi_l$ dominates the sum of the negative terms inside $\Phi_l$, for each $l$. In conclusion, the right-hand side of~(\ref{MAcycle_new}) (and therefore the right-hand side of~(\ref{MAcycle})) is a positive linear combination of the vectors $v_{l+1} - v_l$, for $1 \le l \le r-1$, so it belongs to $C^o$.

\bigskip

Consider now the case where $x$ {\em does} belong to one or more uncertainty regions of the toric differential inclusion $\T_{\H,\delta_0}$. Recall that the cone of $\T_{\H,\delta_0}$ at $x$ is $F_{\H,\delta_0} (X)$ (see Definition~\ref{def_TDI}).
Then, by using (\ref{TDI_2}) and the calculations in the proof of Lemma \ref{lem_1}, we conclude that the set  $F_{\H,\delta_0} (X)$ is a cone generated by two types of vectors: vectors of the ``first type", which are of the form $\pm(s_i - s_j)$ such that $s_i\rightleftharpoons s_j$ is a reversible edge and $x$ is in the uncertainty region of the hyperplane orthogonal to $s_i - s_j$, and vectors of the ``second type",  which are of the form $s_l - s_m$ such that $s_l\rightleftharpoons s_m$ is a reversible edge and  $x$ is {\em not} in the uncertainty region of the hyperplane orthogonal to  $s_l - s_m$, and moreover $(s_l - s_m) \cdot X < 0$.
Note that, without loss of generality we can assume that {\em not} all vectors are of the first type; otherwise we immediately obtain that the right-hand side of~(\ref{MAcycle})    belongs to $F_{\H,\delta_0} (X)$.

Consider now a vector $w$ in the interior of the polar cone of the cone $F_{\H,\delta_0} (X)$; then $w$ satisfies $(s_i - s_j) \cdot w = 0$ for vectors of first type, and $(s_l - s_m) \cdot w < 0$  for vectors of second type. 
%
%
We also consider  the orthogonal projections of the vectors  $s_1, s_2 ,..., s_r$ on the line $\ell_w$ that passes through the origin in the direction given by $w$. Unlike the previous case, in this case some projections will coincide; more precisely, if $s_i$ and $s_j$ are like in the first type above, then their projections will coincide because $s_i \cdot w = s_j \cdot w$.

Nevertheless, we can still give a second set of names to the vectors $s_1, s_2 ,..., s_r$, say $v_1, v_2 ,..., v_r$, to record the order in which these projections appear along the line $\ell_w$, with the caveat that we will have one or more cases where the projections coincide. Our ordering is chosen such that  
\begin{equation}\label{1234}
(v_{l+1}-v_l) \cdot w \le 0.
\end{equation}
Note that if we allow $w$ to vary within the interior of the polar cone of the cone $F_{\H,\delta_0} (X)$, the inequality (\ref{1234}) will still hold. This implies that whenever the projections on $\ell_w$ of  $v_l$ and  $v_{l+1}$ are distinct, the vector $v_{l+1}-v_l$ belongs to the cone $F_{\H,\delta_0} (X)$; moreover, if their projections coincide, then both vectors $\pm(v_{l+1}-v_l)$ belong to $F_{\H,\delta_0} (X)$. 

Now we proceed exactly like in the previous case. We can still re-group terms like in formula (\ref{MAcycle_new}), and, in order to conclude that the right-hand side of (\ref{MAcycle_new}) belongs to $F_{\H,\delta_0} (X)$, we only have to check that $\Phi_l > 0$ for values of $l$ where projections of  $v_l$ and  $v_{l+1}$ are distinct. But, in the same way as before, such $\Phi_l$ have an equal number of positive and negative terms of the form $k_i x^{v_i}$, and, also as before, the positive terms are larger than the negative terms.

%

\bigskip

Finally, if the weakly reversible graph $\G$ is {\it not} a single oriented cycle, then we write it as a union of cyclic graphs, $\displaystyle \G = \bigcup_{i=1}^g \G_i$, and we can argue as above for each such $\G_i$. We obtain that the variable-$k$ weakly reversible dynamical systems given by the cycle $\G_i$ are embedded in toric differential inclusions generated by some set of hyperplanes $\H_i$.
Note now that the right-hand side of a variable-$k$ weakly reversible dynamical system given by $\G$ can be decomposed into a sum of terms, such that each term is of the form given by the right-hand side of a variable-$k$ weakly reversible dynamical system determined by $\G_i$.  (We may have to use smaller $\eps_i$ values for the terms in the decomposition, because the same edge of $\G$ may belong to several graphs $\G_i$.) 
\noindent
Note also that if we define the set of hyperplanes  
$$
\displaystyle \H = \bigcup_{i=1}^g \H_i
$$
then, for any $i$ and $\delta$, the toric differential inclusion given by the fan $\F_{\H_i}$ and  $\delta$ {\em is embedded in} the toric differential inclusion given by the fan $\F_{\H}$ and  $\delta$, because every cone $C_i \in \F_{\H_i}$ can be written as a union of cones from $\F_{\H}$, and whenever $C \subset \tilde C$ it follows that $C^o \supset \tilde C^o$.
Then we conclude that any variable-$k$ mass-action system given by $\G$ can be embedded into a toric differential inclusion generated by the set of hyperplanes~$\displaystyle \H$.
\end{proof}

\bigskip

\section{Applications and conclusions}

In this paper we have introduced toric differential inclusions, and we have shown that any polynomial dynamical system on the positive orthant is generated by an E-graph (which is not unique). Moreover, if this E-graph can be chosen to be weakly reversible, then the polynomial dynamical system can be embedded into a toric differential inclusion. Most importantly, toric differential inclusions have a rich geometric structure that can be used in the construction of invariant regions needed for the proof of important conjectures in this field~\cite{Craciun_GAC, CNP}. 

The idea of thinking about  polynomial dynamical systems as being generated by  E-graphs was  inspired by  the way  reaction networks generate polynomial dynamical systems under the assumption of mass-action kinetics. Indeed, the set of polynomial dynamical systems generated by  E-graphs $\G = (V,E)$ that satisfy $V \subset \ZZ_{\ge 0}^n$ is exactly the same as the set of all mass-action dynamical systems~\cite{ABCM_2018, Brunner_2018}.  

Note also that while the formulations of the conjectures in Section 2.2 are more general than the usual formulations for mass-action systems (which restrict the exponent vectors to be nonnegative), there is a simple way to show that these two versions are actually equivalent, by ``time-rescaling" via multiplication by a scalar field of the form $x^{(M\cdot{\mathbf 1)}}$, where  $\mathbf 1$ is a vector with all 1 components. This multiplication can be chosen such that it shifts the E-graph into the non-negative orthant, while preserving  all trajectory curves~\cite{Craciun_GAC}.

On the other hand, our results on embedding weakly reversible polynomial dynamical systems into toric differential inclusions suggests other kinds of {\em generalizations of the Persistence Conjecture and of the Permanence Conjecture}, as follows. Note that, in the proof of Lemma~\ref{lem_1}, in order to be able to obtain that embedding into a differential inclusion, it is not really necessary to know that  the values of $k_{s \to s'}(t),\ k_{s' \to s}(t)$ are contained in an interval of the form $[\eps, \frac{1}{\eps}]$; exactly the same calculations will work if we just know that 
\begin{equation}\label{ratio}
\eps_0 \le \frac{k_{s \to s'}(t)}{k_{s' \to s}(t)} \le \frac{1}{\eps_0}
\end{equation}
for some $\eps_0 > 0$.
Similarly, in Proposition~\ref{prop_tor_TOR}, we don't need to assume that all the time-dependent parameter values $k_{s \to s'}(t)$ are bounded away from zero and infinity; we just need to assume that for any reversible reaction the ratio of the corresponding parameter values is bounded away from zero and infinity, as in (\ref{ratio}).

Even in Theorem~\ref{thm_tor_TOR}, in the case of a weakly reversible E-graph $\G$, it is enough to assume that we have 
\begin{equation}\label{ratio_2}
\eps_0 \le \frac{k_{s \to s'}(t)}{k_{\tilde s \to \tilde s'}(t)} \le \frac{1}{\eps_0}.
\end{equation}
for any edges $s \to s$ and $\tilde s \to \tilde s'$ that are {\em in the same connected component of $\G$}. 

These observations suggest that, for example, if we could take advantage of the embedding into toric differential inclusions and prove this stronger version of the Persistence Conjecture, we would obtain the following interesting conclusion for the dynamics of chemical and biochemical reaction networks:  if several (weakly reversible) reaction networks or pathways are coupled together, then the resulting dynamics will still be persistent even if the external factors that influence each pathway are widely different in size.   For example, if we have two weakly reversible biochemical pathways, and the reaction rate parameters in one pathway are modulated by temperature, while in the other pathway they are modulated by a signaling protein whose concentration is unrelated to temperature, then by coupling together these two pathways we should still maintain the persistence property.

\bigskip

Also,  generally speaking, the embedding of weakly reversible mass-action systems into toric differential inclusions provides a more rigorous interpretation for the standard ``hand-waving" intuition behind the Persistence and Permanence Conjectures. Namely, for mass-action systems it is quite reasonable to think that, if every reaction is part of a cycle   (i.e., if the reaction network is weakly reversible) then the chemical reactions should not be able to drive any concentration to zero, because if a reaction consumes a species, then soon a chain of reactions that follow it will work to replenish  that species. This intuition is a bit too vague to be turned into a proof, but, as we can see in Fig.~\ref{fig:3}$(h)$, we now have a more concrete way to think about it: if we focus on some  region where the right-hand side of the toric differential inclusion is constant, then this constant cone of directions seems to point ``toward the middle" of the positive quadrant, i.e., away from the boundary, and also away from infinity.

\bigskip

In particular, given an embedding of a two-variable polynomial dynamical system into a toric differential inclusion, we can immediately obtain families of invariant regions for this dynamical system in $\RR^2_{>0}$, as illustrated in Fig.~\ref{fig:2}. Such an embedding allows us to construct ``zero-separating curves" (as shown in Fig.~\ref{fig:2}$(c)$) which prevent positive trajectories from approaching the origin. As shown in~\cite{CNP}, when constructed for variable-$k$ polynomial dynamical systems, these curves are the key tool for a proof of the persistence of vertex-balanced dynamical systems in $\RR^3_{>0}$. Similarly, given any positive initial point, we can use the embedding to construct a compact invariant region that contains that point (as shown in Fig.~\ref{fig:2}$(d)$).

Therefore, the embeddings of some polynomial dynamical systems into toric differential inclusions allow us to give very short proofs of the main results in~\cite{CNP} (i.e., a proof of the Persistence Conjecture in $\RR^2_{>0}$, and of the Global Attractor Conjecture in $\RR^3_{>0}$) and also to generalize the Persistence Conjecture result in $\RR^2_{>0}$ as we described above; see~\cite{Craciun_GAC} for more details. More importantly, similar geometric constructions of invariant regions based on toric differential inclusions can also be done in higher dimensions~\cite{Craciun_GAC}.

\bigskip

Outside the setting considered here, global convergence results for mass-action systems have been recently used to study  reaction-diffusion equations via the methods of lines~\cite{method_of_lines}, have been adapted for analyzing some versions of discrete Boltzmann equations~\cite{Craciun_Tran}, and an entropy method has been used to study a large class of reaction-diffusion systems that arise from vertex-balanced networks~\cite{Desvillettes_2017}. Interestingly, reversibility and weak reversibility play a role in these works as well.

\section{Acknowledgments} 

This work benefited from feedback from the organizers and participants of the Workshop on the Global Attractor Conjecture held at San Jose State University in March 2016. Exceptionally useful and detailed reviewer comments helped improve the presentation of these results. We also acknowledge  support from the National Science Foundation under grants DMS-1412643 and DMS-1816238.

\bigskip

\bigskip

\bigskip

\end{document}